\documentclass[11pt]{sn-jnl}
\usepackage{graphicx}%
\usepackage{multirow}%
\usepackage{amsmath,amssymb,amsfonts}%
\usepackage{amsthm}%

\theoremstyle{thmstyleone}%
\newtheorem{theorem}{Theorem}
\theoremstyle{thmstyletwo}%
\newtheorem{example}{Example}%
\newtheorem{lemma}{Lemma}%

\theoremstyle{thmstylethree}%
\newtheorem{definition}{Definition}%

\usepackage{mathrsfs}%
\usepackage[title]{appendix}%
\usepackage{xcolor}%
\usepackage{textcomp}%
\usepackage{manyfoot}%
\usepackage{booktabs}%

\usepackage{algorithm}
\usepackage{algorithmic}

\usepackage{listings}%
\usepackage{subfigure}   
\usepackage{epsfig}
\usepackage{epstopdf}
\usepackage{hyperref} 
\usepackage{optidef}
\usepackage{enumitem}
\usepackage{bm}
\newcommand{\R}{\mathbb{R}}
\newcommand{\norm}[1]{\left\lVert#1\right\rVert}
\newcommand{\omegaVec}{\boldsymbol{\omega}} 

\newcommand{\bb}[1]{\mathbf{#1}}

\newcounter{cases}
\newcounter{subcases}[cases]
\newenvironment{mycases}
  {%
    \setcounter{cases}{0}%
    \setcounter{subcases}{0}%
  }
  {%
    \par
  }

\begin{document}

\title{A Homotopy Framework for Constrained Multiobjective Optimization}


\author[1]{Olaoluwa Ogunleye} 
\author*[1]{Guangming Yao}\email{gyao@clarkson.edu}
\author[2]{Jianhua Zhang} 
\affil[1]{Department of Mathematics, Clarkson University, Potsdam, NY 13699-5815, USA}
\affil*[1]{Department of Mathematics, Clarkson University, Potsdam, NY 13699-5815, USA}
\affil[2]{ Department of Electrical and Computer Engineering, Clarkson University, Potsdam, NY 13699, USA}

\abstract{We develop a homotopy-based framework for computing Karush–Kuhn–Tucker (KKT) points of multiobjective optimization problems. The proposed homotopy map continuously deforms an easily solvable system into the KKT conditions associated with the multiobjective problem, yielding a deterministic and structure-preserving continuation path. Under mild regularity assumptions, we establish global convergence of the homotopy trajectory to a Pareto-stationary solution for any initial point chosen in the interior of the feasible region. In numerical experiments, the method exhibits robust convergence even when initialized from nonfeasible points, indicating stability beyond the theoretical guarantees. Efficient predictor–corrector continuation strategies are employed to trace the homotopy path. Numerical results on benchmark problems compare the proposed approach with classical scalarization methods and the evolutionary algorithm NSGA-II, demonstrating competitive computational efficiency and consistent solution quality. These results highlight the effectiveness of the homotopy framework for constrained multiobjective optimization and motivate extensions to more general problem settings and adaptive parameter strategies.}

\keywords{Multiobjective Optimization, Homotopy Method, Global Convergence}

\maketitle

\section{Multiobjective Optimization with Equality and Inequality Constraints}\label{sec:moo_formulation}
Multiobjective optimization (MOO) constitutes a fundamental framework for addressing decision-making problems involving multiple, often conflicting, objectives. Applications arise pervasively in fields such as engineering design~\cite{rao2009engineering}, economics~\cite{miettinen1999nonlinear}, machine learning~\cite{emmerich2018tutorial}, management, and operations research~\cite{cohon2013multiobjective}. Typical examples include investment project selection, enterprise production scheduling, and human resource allocation problems. Comprehensive discussions of these formulations and their economic interpretations can be found in~\cite{Charne1961, Fandel1980} and the references therein. 
The conceptual origins of this field trace back to the pioneering works of \cite{pareto1964cours,neumann1944,koopmans1951}, which laid the theoretical foundation for modern decision theory.

Let $\mathbb{R}^n$ be the $n$-dimensional Euclidean space, $\mathbb{R}^n_+ := \{ x \in \mathbb{R}^n : x_i \ge 0, ~ i=1,\ldots,n \}$, $\mathbb{R}^n_{++} := \{ x \in \mathbb{R}^n : x_i > 0, ~ i=1,\ldots,n \}$
be the nonnegative and positive orthants of $\mathbb{R}^n$, respectively. 
For any two vectors $\bm{y}=(y_1, y_2, \dots, y_n)$ and $\bm{z}=(z_1, z_2, \dots, z_n)$ in
$\mathbb{R}^n,$ we adopt the standard component-wise notation: 
\begin{center}
    $
    \begin{cases}
    \bm{y} = \bm{z} ~\text{ if and only if }~ y_i=z_i, \forall i=1, 2, \dots, n\\
    \bm{y} < \bm{z} ~\text{ if and only if }~  y_i<z_i, \forall i=1, 2, \dots, n\\
    \bm{y} \leqq \bm{z} ~\text{ if and only if }~  y_i \leq z_i, \forall i=1, 2, \dots, n\\
    \bm{y} \leq \bm{z} ~\text{ if and only if }~  y_i\leq z_i,\text{ and } \bm{y}\neq \bm{z}, \forall i=1, 2, \dots, n.
\end{cases}
    $
\end{center}

A MOO seeks to determine a vector of decision variables that simultaneously optimizes a set of conflicting objective functions. Conventionally, the canonical MOO is formulated as a minimization problem:
\begin{mini!}{\bm{x} \in \mathbb{R}^n}{\bm{f}(\bm{x})}{}{}\nonumber
\addConstraint{\bm{g}(\bm{x}) \leq 0\tag{1}
\label{MOO}}
\addConstraint{\bm{h}(\bm{x}) = 0.}\nonumber
\end{mini!}

\noindent  where $\bm{x}$ is the decision variable in the $n$-dimensional space, and $\mathbf{f} =(f_1, f_2, \dots, f_p)^T $ maps the decision space to the $p$-dimensional objective space. The feasible region $\Omega = \left\{ \bm{x} \in \mathbb{R}^n : \bm{g}(\bm{x}) \leq 0, \bm{h}(\bm{x}) = 0 \right\}$,  is defined by a set of inequality constraint functions $g=(g_1, g_2, \dots, g_m)^T$ and equality constraint functions $h=(h_1,h_2,\dots,h_s)^T$. We assume that all components of inequality constraints \( g_i(x) \), equality constraints \( h_j(\bm{x}) \), and objective function \( f_k(\bm{x}) \) are twice continuously differentiable functions, for indices \( i \in \{1, \ldots, m\} \),  \( j \in \{1, \ldots, s\} \), and \( k \in \{1, \ldots, p\} \).\\

\noindent The interior of the feasible region as $\Omega_0 = \left\{ \bm{x} \in \mathbb{R}^n : g(x) < 0, \ h(x) = 0 \right\}$, and the boundary of $\Omega$ is $\partial \Omega = \Omega \setminus \Omega^0$.
For any point $\bm{x}\in\Omega$, the active index set of inequality constraints is $I(\bm{x}) := \left\{ i \in \{1, \ldots, m\} : g_i(\bm{x}) = 0 \right\}.$ Let $\mathbb{R}_{++}^p$ denotes the set of $p$-dimensional vectors with strictly positive components. The set of strictly positive convex weights is defined as $W^{++} = \left\{ \bb{w} \in \mathbb{R}_{++}^p : \sum\limits_{i=1}^p \text{w}_i = 1 \right\}$. These weights will later be used in scalarization schemes. 
 
In contrast to single-objective optimization, which typically yields a unique optimal solution,  a unique global optimum generally does not exist in an MOO due to the conflicting nature of the objectives. Instead, the solution concept is based on Pareto optimality~\cite{pareto1964cours, miettinen1999nonlinear}.
\begin{definition}
A feasible point $\bm{x}\in \Omega$ is said to be an efficient solution (or Pareto optimal) to problem (MOO) if there is no $\bm{y}\in \Omega$ such that 
$\bm{f}(\bm{y}) \leq \bm{f}(\bm{x}).$
The set of all Pareto optimal solutions in the decision space is called the Pareto set $\mathcal{P}$, and the mapping of the Pareto set into the objective space, $\bm{f}(\mathcal{P})$, is termed the Pareto front. 
\end{definition}
In other words, $\bm{x}$ is an efficient solution if no objective function \( f_k \) can be improved without causing degradation in at least one other objective.
It is well known that if $\bm{x}$ is an efficient solution of (MOO) and some constraint qualifications, such as the Linear Independence Constraint Qualification (LICQ) or the Abadie Constraint Qualification holds, then the necessary conditions for Pareto optimality at $\bm{x}$ are given by the Karush-Kuhn-Tucker (KKT) conditions:
\begin{subequations} \label{KKT}
\begin{align}
\bb{J}_{\bm{f}}(\bm{x})^T \bb{w} + \bb{J}_{\bm{g}}(\bm{x})^T \bm{u} + \bb{J}_{\bm{h}}(\bm{x})^T \bm{v} &= 0, \label{eq:KKT_stationarity} \\
U \bm{g}(\bm{x}) &= 0, \label{eq:KKT_complementarity} \\
\bm{h}(\bm{x}) &= 0, \label{eq:KKT_feasibility}\\
\sum_{i=1}^p \text{w}_i & = 1, \label{eq:KKT_c}
\end{align}
\end{subequations}
\noindent where \( \bb{w} \in \mathbb{R}_+^p \setminus \{0\} \) is the weight vector for scalarization, \( \bm{u} \in \mathbb{R}_+^m \), \( \bm{v} \in \mathbb{R}^s \) are the Lagrange multipliers, \( U = \operatorname{diag}(u_1, \ldots, u_m) \) is the diagonal matrix formed from multipliers of the inequality constraints, $\bb{J}_{\bm{f}}(\bm{x})$ is the Jacobian matrix of ${\bm{f}}$ w.r.t $\bm{x}$, and the same applies to $\bb{J}_{\bm{g}}(\bm{x})$ and $\bb{J}_{\bm{h}}(\bm{x})$:
\[
\begin{aligned}
    \bb{J}_{\bm{f}}(\bm{x}) &= [\,\nabla f_1(\bm{x}), \dots, \nabla f_p(\bm{x})\,]^T 
    &&\in \mathbb{R}^{p \times n}, \\[4pt]
    \bb{J}_{\bm{g}}(\bm{x}) &= [\,\nabla g_1(\bm{x}), \dots, \nabla g_m(\bm{x})\,]^T 
    &&\in \mathbb{R}^{m \times n}, \\[4pt]
    \bb{J}_{\bm{h}}(\bm{x}) &= [\,\nabla h_1(\bm{x}), \dots, \nabla h_s(\bm{x})\,]^T 
    &&\in \mathbb{R}^{s \times n}.
\end{aligned}
\]
and
$\mathbb{R}^n_+ $ denotes the set of $n$-dimensional vectors with nonnegative entries. A point \( \bm{x} \in \Omega \) that satisfies the KKT conditions is referred to as a \textit{KKT point} of the multiobjective optimization problem. 
The determination of Karush–Kuhn–Tucker (KKT) points, which characterize optimal trade-off solutions in multiobjective settings, therefore represents a central problem in this area. 
Observe that system \eqref{KKT} encodes three conditions: \eqref{eq:KKT_stationarity} is the stationarity condition; \eqref{eq:KKT_complementarity} represents the complementarity condition for inequality constraints; and \eqref{eq:KKT_feasibility} enforces equality constraint satisfaction. These KKT conditions serve as the foundation for the homotopy-based framework developed in subsequent sections.

The primary motivation of this work is to formalize and implement a homotopy framework for multiobjective optimization
that enables robust, interpretable, and relatively uniform exploration of the Pareto front. 

The theoretical foundation of homotopy methods was established through the seminal works of~\cite{kellogg1976,smale1976, chow1978}, which laid the groundwork for their global convergence analysis. Since then, homotopy techniques have evolved into indispensable numerical tools for solving complementarity problems, variational inequalities, and nonlinear mathematical programming models~\cite{gowda1996, facchinei2003, shang2011, mccormick1989, monteroro1990, lin1997}. While classical algorithms such as Interior Point Methods (IPMs) and Sequential Quadratic Programming (SQP) remain the predominant techniques for solving KKT systems~\cite{nocedal2006numerical}, homotopy-based methods provide an attractive alternative for nonlinear and multiobjective formulations by continuously deforming a simple, well-posed system into the original Karush–Kuhn–Tucker (KKT) system~\eqref{KKT} through a smooth transformation known as a homotopy path~\cite{allgower1990numerical, watson2002applied}. In contrast to IPMs, homotopy approaches do not require strict feasibility conditions and are capable of tracing multiple solution branches~\cite{mehta2016numerical,nocedal2006numerical} without imposing strict interiority assumptions~\cite{mehta2016numerical, hauenstein2012polynomial}. This property makes them particularly suitable for addressing nonconvex and multiobjective optimization problems~\cite{hauenstein2012polynomial, kruger2019continuation}. Furthermore, homotopy formulations naturally mitigate numerical issues related to degeneracy and ill-conditioning that often occur in complex constrained optimization problems~\cite{allgower2003introduction}. 
Their intrinsic path-tracking mechanism enables the exploration of the global solution structure, allowing for a more complete characterization of the feasible and optimal sets~\cite{hauenstein2012polynomial, allgower1990numerical}, thereby offering enhanced robustness for nonconvex and multiobjective problems.

The modern development of homotopy methods in optimization began with the work of Garcia and Zangwill~\cite{Garcia1981}, who first applied homotopy techniques to convex programming problems, thereby demonstrating their potential for handling general mathematical programs. Subsequent studies include interior-point algorithm\,\cite{Megiddo1988,Kojima1988}, spline smoothing homotopy\,\cite{dong2023spline}, aggregate constraint homotopy\,\cite{zhou2018global}, and others. The idea of the Karmarkar interior-point algorithm could be interpreted as a specific path-following procedure for linear programming inspired further research into interior path-following strategies for general nonlinear programming. In~\cite{lin1997} introduced the Combined Homotopy Interior Point (CHIP) method, which integrates Newton and linear homotopies to compute KKT points for convex nonlinear programs. Later,\,\cite{lin1997} demonstrated the convergence of the CHIP algorithm even in the absence of strict convexity in the logarithmic barrier function. The CHIP framework was subsequently extended in~\cite{Lin2003, Maeda2004} to convex multiobjective optimization, where they established the existence of KKT solutions for the associated purification problem. Further generalizations were proposed by~\cite{song2008}, who extended the results by constructing a new combined homotopy mapping. Their approach yielded a smooth and bounded homotopy path under the normal cone condition and a weakened Mangasarian–Fromovitz constraint qualification. This work has influenced a broad range of continuation-based optimization methods. Subsequent work has focused on relaxing theoretical assumptions and improving robustness, including the establishment of global convergence under the Mangasarian–Fromovitz constraint qualification\,\cite{YaoSong2013_AM} and the development of alternative homotopy interior-point constructions for multiobjective problems\,\cite{Zhao2012_JAM}. Beyond multiobjective optimization, homotopy continuation techniques inspired by this line of research have been incorporated into modern algorithmic frameworks for challenging nonconvex problems; for example, \cite{MaLi2022_AICHE} proposed homotopy-continuation-enhanced branch-and-bound algorithms for strongly nonconvex mixed-integer nonlinear optimization, demonstrating the continued relevance of homotopy methods in large-scale and applied optimization contexts. Recent survey and applied studies\,\cite{ZhangEtAl2016_AIMS} further situate the Song–Yao approach within the broader development of homotopy-based optimization methods.

Despite these significant advances, existing homotopy-based optimization techniques often rely on restrictive assumptions such as strong constraint qualifications, which may not hold in complex, real-world multiobjective scenarios. To address these limitations, the present work develops a generalized homotopy continuation framework under a weakened normal cone condition (WNCC), enabling the computation of KKT points for constrained MOOs with improved numerical stability and broader theoretical guarantees. Also, we show that our result converge even if numerically we start from a infeasible point.

In this paper, a homotopy mapping is constructed so that embeds the KKT system into a parameterized path structure, allowing continuous transition between different trade-offs while preserving feasibility. Building on and extending existing results on homotopy methods and constraint qualifications~\cite{zhao2012homotopy, liu2017homotopy, allgower1990numerical, allgower2003introduction}, we establish conditions under which the homotopy path converges to Pareto-optimal KKT points, with particular emphasis on regularity and feasibility. This guaranteed theoretical global convergence under weakened assumptions.    
A systematic numerical experiments on benchmark problems, comparing the proposed method against classical scalarization techniques, including the weighted sum method (WSM), the $\epsilon-$constraints method (ECM), the global criterion method (GCM)~\cite{bazgan2022weighted, helfrich2024using, miettinen1999nonlinear}), the Lexicograpbic method, and the non-dominated sorting genetic algorithm II (NSGA-II)~\cite{deb2002nsga2}. The comparison considers convergence behavior, computational time, function evaluation, and pareto front coverage. By embedding the scalarization weights within the homotopy formulation, we analyze how the trajectory of KKT points yields well-distributed Pareto fronts, reducing the need for extensive parameter tuning and mitigating clustering effects common in other methods.
Collectively, these contributions advance the effort to reconcile the mathematical structure of classical optimization with the adaptability of modern search-based techniques. The proposed homotopy framework offers a deterministic yet flexible mechanism for generating well-distributed Pareto fronts with strong theoretical foundations and practical utility.

The remainder of the paper is organized as follows. Section~\ref{sec:related} introduces the proposed homotopy formulation for transforming the multiobjective KKT system into a parameterized path and presents the construction of the associated homotopy map, together with theoretical results on smoothness, path boundedness, and global convergence. This section also reviews classical scalarization methods and evolutionary approaches commonly used in multiobjective optimization, providing context for the proposed approach. Section~\ref{sec:numerical} describes the numerical implementation and computational setup, details key algorithmic components such as predictor–corrector continuation, and presents numerical experiments on benchmark problems, accompanied by a discussion of solution behavior, computational efficiency, and comparative performance across methods. The methods compared in this study include the proposed homotopy continuation approach, the weighted sum method, the $\epsilon$-constraint method, the global criterion method, the lexicographic method, and the evolutionary algorithm NSGA-II. Finally, Section~\ref{sec:conclusion} summarizes the main findings and outlines several directions for future research.
\section{Algorithms for Multiobjective Optimization and Pareto Front Generation} 
\label{sec:related}

Solution methodologies for MOOs are generally categorized based on when the decision-maker provides preference information:
\begin{itemize}
    \item \textit{A priori} methods require sufficient preference information (e.g., weights or objective rankings) to be expressed before the solution process begins, including methods such as the Weighted Sum Method, the $\epsilon-$constraints method, the global criterion method, and the Lexicographic Method\,\cite{miettinen1999nonlinear, coello2006evolutionary}. 
    \item \textit{A posteriori} methods generate a representation of the entire Pareto front first, allowing the decision-maker to select the most preferred compromise solution afterward, examples include Evolutionary Multi-Objective Algorithms (EMOAs) like the nondominated sorting genetic algorithm II(NSGA-II) and SPEA2\,\cite{deb2002nsga2, zitzler2001spea2}. 
    \item Interactive methods, which the decison-maker provides preference information repeatedly during the optimization process, not only at the beginning or the end. Well-known interative multiobjective optimization methods include Step Method, Interactive Nondifferentiable Multiobjective Bundle-Based System, Pareto Race, Interactive Weighted Chebyshev Method, etc.
\end{itemize}

Scalarization methods are widely used techniques that transform a multi-objective problem into an equivalent single-objective optimization problem that can be solved using standard optimization algorithms\,\cite{van2012evaluation}. These approaches fall within the \textit{a priori} category because the transformation requires explicit preference information—such as weights, reference points, or priority orderings—to be specified in advance. Consequently, commonly used techniques including the Weighted Sum Method, the $\epsilon$-constraints method, the global criterion method, and the Lexicographic Method are all examples of scalarization-based \textit{a priori} methods. Unlike scalarization approaches, evolutionary algorithms such as NSGA-II do not rely on preference-based single-objective reformulations; instead, they approximate the Pareto front directly through population-based search.

\subsection{The Proposed Homotopy Method}

To solve the KKT system~\eqref{eq:KKT_stationarity}-\eqref{eq:KKT_c} of MOO, 
we construct the following homotopy map:
\begin{equation} \label{homotopy} 
H(\bm{\omega}^0, \bm{\omega}, t) =
\begin{bmatrix}
(1-t)\big(\bb{J}_{\bm{f}}(\bm{x})^T \bb{w} 
+ \bb{J}_{\bm{g}}(\bm{x})^T \bm{u}\big)
+ \bb{J}_{\bm{h}}(\bm{x})^T \bm{v}
+ t(\bm{x}-\bm{x}^0) \\[2pt]
\bm{h}(\bm{x}) \\[2pt]
U\, \bm{g}(\bm{x}) - t\,U^0 \bm{g}(\bm{x}^0) \\[2pt]
(1-t)\!\left(1-\sum\limits_{i=1}^p \text{w}_i\right)e 
- t\!\left(\bb{w}^{3/2} - (\bb{w}^0)^{3/2}\right)
\end{bmatrix}
= 0,
\end{equation}
\noindent
where $\bm{\omega}^0 := (\bm{x}^0, \bb{w}^0, \bm{u}^0, \bm{v}^0=0) \in N$, $\bm{\omega} = (\bm{x}, \bb{w}, \bm{u}, \bm{v}) \in M$,  and $t \in [0,1]$, in which 
\[
N = \Omega^0 \times W^{++} \times \mathbb{R}^m_{+} \times \mathbb{\R}^s, 
\qquad 
M = \Omega \times \mathbb{R}^{p+m}_{+} \times \mathbb{R}^s,
\qquad
H: N \times M \times (0,1] \to \mathbb{R}^{n+p+m+s}.
\]
\noindent 
The unit vector $\bm{e} = (1, 1, \dots, 1)^T \in \mathbb{R}^p$, 
and the fractional power vector $\bb{w}^{3/2} = (\text{w}_1^{3/2}, \dots, \text{w}_p^{3/2})^T$  are introduced to avoid trivial solutions.  

The following three assumptions are used throughout the rest of the paper:
\begin{description}
        \item[(A)] \( \Omega \) is nonempty and bounded.
        \item[(B)] (LICQ) For all \( \bm{x} \in \Omega \), the set \( \{\nabla h_j(\bm{x}), \nabla g_i(\bm{x}): i \in I(\bm{x}), ~ j\in J\} \) is linearly independent, where 
        \begin{align*}
            J := \{j=1, \dots, s\}, \qquad 
            I(\bm{x}) := \{i=1, \dots, m: ~ g_i(\bm{x}) = 0\}.
        \end{align*}
        \item[(C)] (WNCC) Weak Normal Cone Condition:  
        \[\forall \bm{x} \in \partial \Omega, \exists \hat{\Omega} \subset \Omega^0, 
        \{\bm{x} + \sum_{i \in I(\bm{x})} u_i \nabla g_i(\bm{x}) + \sum_{j \in J} v_j \nabla h_j(\bm{x}) : u_i \geq 0, i \in I(\bm{x}), \bm{v} \in \mathbb{R}^s \} \cap \hat{\Omega} = \emptyset; \]
           \[\forall \bm{x} \in \Omega^0, \exists \hat{\Omega} \subset \Omega^0,
        \{\bm{x} + \sum_{i \in I(\bm{x})} u_i \nabla g_i(\bm{x}) + \sum_{j \in J} v_j \nabla h_j(\bm{x}) : u_i \geq 0, i \in I(\bm{x}), \bm{v} \in \mathbb{R}^s \} \cap \hat{\Omega} = \{\bm{x}\}. \]
\end{description}

At $t = 1$, the system~\eqref{homotopy} reduces to a simple, solvable form corresponding to the initial point $\bm{\omega}^0$: 
\begin{subequations}
\label{eq:t1_system}
\begin{align}
    \textbf{J}_{\bm{h}}(\bm{x})^T \bm{v} + \bm{x}-\bm{x}^0 &= 0, \label{eq:t1a}\\
    \bm{h}(\bm{x}) &= 0, \label{eq:t1b}\\
    U g(\bm{x}) - U^{0} g(\bm{x}^{0}) &= 0, \label{eq:t1c}\\
    \bb{w}^{3/2} &= (\bb{w}^{0})^{3/2}. \label{eq:t1d}
\end{align}
\end{subequations}
By assumption (B),  \ref{eq:t1a} implies that $\bm{x} = \bm{x}^{0}$. Therefore,  $\bm{v} = 0$ by assumption (c). Since $g(\bm{x}^{0}) < 0$ and $\bm{x} = \bm{x}^{0}$,\,\eqref{eq:t1c} implies $\bm{u} = \bm{u}^{0}$. Equation~\eqref{eq:t1d} shows that $\bb{w} = \bb{w}^{0}$. Therefore, $H(\bm{\omega}, \bm{\omega}^{0}, 1) = 0$ has a unique solution $\bm{\omega} = \bm{\omega}^{0} = (\bm{x}^{0}, \bb{w}^{0}, \bm{u}^{0}, \bm{v}^0=0)$.

As $t \to 0$, the system continuously deforms into the KKT conditions\,\eqref{eq:KKT_stationarity} of MOO. 
Thus, tracing the solution path from $t = 1$ to $t = 0$ yields a KKT point of the original multiobjective optimization problem.
The following pseudocode represents the Predictor-Corrector Homotopy method implementation. It solves a MOO by tracking the path from a known solution ($x_0$) to the Pareto front as the homotopy parameter $\mu$ transitions from $1$ to $0$.
\noindent For a given initial point \(\bm{\omega}^{0}\), we define
\[
H_{\bm{\omega}^{0}}(\bm{\omega}, t) := H(\bm{\omega}, \bm{\omega}^{0}, t),
\]
and the associated zero set
\[
H_{\bm{\omega}^{0}}^{-1}(0) 
= 
\Big\{ 
(\bm{\omega}, t) \in 
\Omega \times \mathbb{R}_{+}^{p+m} \times \mathbb{R}^s \times (0,1] 
~\big|~ 
H(\bm{\omega}, \bm{\omega}^{0}, t) = 0 
\Big\}.
\]

\begin{lemma}\label{zhengze}
Suppose that $\Omega \ne \emptyset$ and conditions (A), (B)
hold. Then for almost all initial points $\omega^0\in
 \Omega^0\times W^{++}\times
 \mathbb{R}^{m}_{++}\times\{0\}, ~0$ is a regular value of $H_{\omega^0},$ and $H_{\omega^0}^{-1}(0)$
consists of some smooth curves. Among them, a smooth curve, say
$\Gamma_{\omega^0},$  starts from $(\omega^0, 1).$
\end{lemma}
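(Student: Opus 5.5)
We will use the standard parametrized Sard theorem for the regularity claim and the implicit function theorem at $t=1$ for the starting curve. We view $H$ as a map of $(\bm{\omega}^0,\bm{\omega},t)$ with $\bm{\omega}^0$ ranging over the open set $\Omega^0\times W^{++}\times\mathbb{R}^m_{++}$, with $\bm{v}^0=0$ fixed. The parameters effectively varied are $\bm{x}^0$, $\bb{w}^0$ and $\bm{u}^0$.

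The key step is to show that $0$ is a regular value of $H$ as a map of $(\bm{\omega}^0,\bm{\omega},t)$ for $t\in(0,1)$. It suffices to exhibit, for every zero with $t\in(0,1)$, a square block of $DH$ of size $n+p+m+s$ that is nonsingular. We take the columns with respect to $(\bm{x}^0,\bb{w}^0,\bm{u}^0,\bm{v})$. These derivatives are as follows.

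\begin{itemize}
\item The first block row has derivative $-tI_n$ with respect to $\bm{x}^0$, and $\bb{J}_{\bm{h}}(\bm{x})^T$ with respect to $\bm{v}$.
\item The $\bm{h}$ row depends on none of $\bm{x}^0,\bb{w}^0,\bm{u}^0,\bm{v}$.
\item The complementarity row has derivative $-t\,\mathrm{diag}(\bm{g}(\bm{x}^0))$ with respect to $\bm{u}^0$, which is nonsingular because $\bm{g}(\bm{x}^0)<0$ and $t>0$. Its derivative with respect to $\bm{x}^0$ is $-tU^0\bb{J}_{\bm{g}}(\bm{x}^0)$.
\item The last row has derivative $\tfrac32 t\,\mathrm{diag}((\bb{w}^0)^{1/2})$ with respect to $\bb{w}^0$, which is nonsingular since $\bb{w}^0>0$.
\end{itemize}

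The $\bm{h}$ row is degenerate in these parameters, so we must also use its derivative $\bb{J}_{\bm{h}}(\bm{x})$ with respect to $\bm{x}$. We therefore choose the columns $(\bm{x}^0,\bb{w}^0,\bm{u}^0,\bm{x})$, or a combination of them. By a row reduction we eliminate $\bm{x}^0$ against the first block, using the invertible $-tI_n$. The remaining question is the full row rank of $\bb{J}_{\bm{h}}(\bm{x})$ (an $s\times n$ matrix), and this follows from the LICQ assumption (B) since $\bm{x}\in\Omega$. Concretely, the submatrix in rows (first, $\bm{h}$, complementarity, weight) and columns $(\bm{x}^0,\bm{x},\bb{w}^0,\bm{u}^0)$ has full row rank $n+s+m+p$. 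The $\bm{x}^0$ column block covers the first rows, $\bb{J}_{\bm{h}}$ covers the $\bm{h}$ rows after eliminating, and the diagonal $\bm{u}^0$ and $\bb{w}^0$ blocks cover the rest. This step is the main obstacle. One has to organize the column choice so that a block-triangular structure is visible, and to handle the cross terms from $\bm{x}$ in the other rows. Those terms are harmless because the $\bm{u}^0$ and $\bb{w}^0$ diagonal blocks can absorb them by column operations.

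With $DH$ onto on $H^{-1}(0)$, the parametrized Sard theorem gives that for almost every $\bm{\omega}^0$, $0$ is a regular value of $H_{\bm{\omega}^0}$ restricted to $t\in(0,1)$. By the preimage theorem, $H_{\bm{\omega}^0}^{-1}(0)$ is then a one-dimensional smooth manifold, that is, a union of smooth curves. At $t=1$ we use the uniqueness computed just before the lemma: $(\bm{\omega}^0,1)$ is the only zero there. We then check directly that $D_{\bm{\omega}}H_{\bm{\omega}^0}(\bm{\omega}^0,1)$ is nonsingular. At $t=1$ this matrix is block triangular, with blocks
\[
\begin{bmatrix} I+\sum_j v_j\nabla^2h_j & \bb{J}_{\bm{h}}^T\\ \bb{J}_{\bm{h}} & 0\end{bmatrix}\ \text{(with } \bm{v}=0\text{)},\qquad \mathrm{diag}(\bm{g}(\bm{x}^0)),\qquad -\tfrac32\mathrm{diag}((\bb{w}^0)^{1/2}).
\]
The first block is the KKT matrix with identity Hessian and full-rank $\bb{J}_{\bm{h}}$, so it is invertible by (B). The other two blocks are invertible as already noted.

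So $0$ is a regular value including at $t=1$. The implicit function theorem then yields a unique smooth curve $\Gamma_{\bm{\omega}^0}$ emanating from $(\bm{\omega}^0,1)$ into $t<1$, transversal to the hyperplane $t=1$, which completes the lemma.
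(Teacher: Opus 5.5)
Your regularity argument for $t\in(0,1)$ is the paper's. You use the same column block $(\bm{x}^0,\bb{w}^0,\bm{u}^0,\bm{x})$ of $DH$, where $-tI_n$, $\bb{J}_{\bm{h}}(\bm{x})$ (full row rank by (B)), $-t\,\mathrm{diag}(\bm{g}(\bm{x}^0))$ and $\tfrac32 t\,\mathrm{diag}((\bb{w}^0)^{1/2})$ give full row rank, and then apply the parametric Sard theorem and the preimage theorem. Your ``row reduction'' wording is loose, but the claim is right. A cleaner check uses a left null vector $(\bm{a},\bm{b},\bm{c},\bm{d})$: the $\bb{w}^0$ column forces $\bm{d}=0$, then the $\bm{u}^0$ column forces $\bm{c}=0$, then the $\bm{x}^0$ column forces $\bm{a}=0$. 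The $\bm{x}$ column then leaves $\bm{b}^T\bb{J}_{\bm{h}}(\bm{x})=0$.

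The step that fails under the lemma's hypotheses is at $t=1$. You conclude that $0$ is a regular value on the slice $t=1$ because $(\bm{\omega}^0,1)$ is the only zero there. That uniqueness does not follow from (A) and (B); the paper's derivation before the lemma appeals to (C). Under (A) and (B) alone it can fail. Take $n=2$, $h(\bm{x})=x_1^2+x_2^2-1$, $g(\bm{x})=x_1^2+x_2^2-4$ and $\bm{x}^0=(1,0)$. Then $\bm{x}=(-1,0)$, $v=-1$, $\bm{u}=\bm{u}^0$, $\bb{w}=\bb{w}^0$ is a second zero at $t=1$, and your argument gives no regularity there.

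The repair is already in your computation. It uses only $t>0$ and never the $t$-column, so it also shows that $0$ is a regular value of the boundary map $(\bm{\omega}^0,\bm{\omega})\mapsto H(\bm{\omega}^0,\bm{\omega},1)$. The paper then applies Sard to both $H$ and this restriction (``$0$ is a regular value of $H$ and $\partial H$''), and uses the inverse image theorem for manifolds with boundary. This gives, for almost every $\bm{\omega}^0$, a $1$-manifold whose boundary is its $t=1$ slice.

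Your block-triangular proof that $D_{\bm{\omega}}H_{\bm{\omega}^0}(\bm{\omega}^0,1)$ is nonsingular is correct and worth keeping. It holds for every $\bm{\omega}^0$, not just almost every one. It also gives transversality of $\Gamma_{\bm{\omega}^0}$ to $\{t=1\}$, which the paper only establishes later, in the proof of Theorem~\ref{conver}.
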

See Appendix \ref{zhengzeappen} for the proof.

\begin{lemma}\label{fenliangyoujie}
Suppose that $\Omega \ne \emptyset$ and conditions (A), (B)
hold. For a given $\omega^0\in \Omega^0\times  W^{++} \times
\mathbb{R}^{m}_{+}\times \mathbb{R}^s,$ if ~$  0 $ is a regular
value of $H_{\omega^0}$, then the $\text{w}$ component of the smooth curve
$\Gamma_{\omega^0}$ is bounded.
\end{lemma}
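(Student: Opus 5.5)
The plan is to read the bound off the last block of equations in \eqref{homotopy} directly, without using the regularity of $0$ or the geometry of $\Omega$. Every point $(\bm{\omega},t)\in\Gamma_{\omega^0}$ has $t\in(0,1]$ and $\bb{w}\in\mathbb{R}^p_+$, because $\Gamma_{\omega^0}\subset H_{\omega^0}^{-1}(0)\subset M\times(0,1]$. Write $S:=\sum_{i=1}^p \text{w}_i\ge 0$. The $k$-th component of the last block then reads
\[
t\,\text{w}_k^{3/2} \;=\; t\,(\text{w}_k^0)^{3/2} + (1-t)(1-S), \qquad k=1,\dots,p .
\]
Since $\bb{w}^0\in W^{++}$, each $\text{w}_k^0\in(0,1)$, and hence $0<(\text{w}_k^0)^{3/2}<1$. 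It therefore suffices to show that $S$ is bounded uniformly along the curve, since $0\le \text{w}_k\le S$ for every $k$.

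I would split into two cases according to the size of $t$.

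\emph{Case $t\ge 1/2$.} Since $S\ge 0$, we have $1-S\le 1$. Dividing the identity by $t$ gives
\[
\text{w}_k^{3/2}\le (\text{w}_k^0)^{3/2}+\tfrac{1-t}{t}\le 1+1=2 .
\]
Hence $\text{w}_k\le 2^{2/3}$ for all $k$.

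\emph{Case $0<t<1/2$.} Here I use that the left-hand side of the identity is nonnegative. This gives
\[
(1-t)(S-1)\le t\,(\text{w}_k^0)^{3/2}< t,
\]
so $S<1+\tfrac{t}{1-t}<2$.

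In both cases $\|\bb{w}\|_\infty\le 2$, independently of the point on $\Gamma_{\omega^0}$, which is exactly the claimed boundedness.

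I do not expect a real obstacle. The only point needing care is the regime $t\to 0$: there, dividing by $t$ blows up, so one must not divide. Instead one exploits the sign of $t\,\text{w}_k^{3/2}$ together with the factor $(1-t)$, which stays bounded away from $0$. This is why I split at a fixed threshold such as $t=1/2$. I would also remark that the constraint $\bb{w}\ge 0$ built into $M$ is essential to the argument, and that assumptions (A) and (B) and the regular-value hypothesis are not needed for this component.
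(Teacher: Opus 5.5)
Your proof is correct and rests on the same observation as the paper's: with $\bb{w}\ge 0$ and $t\in(0,1]$, the last block of \eqref{homotopy} equates the bounded quantity $(1-t)+t(\text{w}_k^0)^{3/2}$ to $(1-t)\sum_i \text{w}_i + t\,\text{w}_k^{3/2}$, so the latter cannot become large. The paper argues by contradiction along a sequence with $\|\bb{w}^k\|\to\infty$, whereas your split at $t=1/2$ gives the explicit bound $\|\bb{w}\|_\infty\le 2$ on all of $H_{\omega^0}^{-1}(0)$, and it makes explicit what the paper leaves implicit: the divergence comes from $(1-t_k)\sum_i\text{w}_i^k$ when $t_*<1$ and from $t_k(\text{w}_j^k)^{3/2}$ when $t_*=1$. (Your argument only needs $\text{w}_k^0\le 1$; this also covers $p=1$, where your claim $\text{w}_k^0\in(0,1)$ is false.)
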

See Appendix \ref{fenliangyoujieappen} for the proof.
\begin{lemma}\label{youjie}
(Boundedness) Suppose that the conditions (A), (B), and (C)
hold. Then for a given $\omega^0\in \hat{\Omega}\times  W^{++} \times
\mathbb{R}^{m}_{+}\times \mathbb{R}^s,$ if $0$ is a regular value
of $H_{\omega^0}$, then $\Gamma_{\omega^0}$ is a bounded curve.
\end{lemma}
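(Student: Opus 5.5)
Since $\Gamma_{\omega^0}\subset H_{\omega^0}^{-1}(0)$ lies in $\Omega\times\mathbb{R}^{p+m}_+\times\mathbb{R}^s\times(0,1]$ and $\Omega$ is bounded by (A), the $\bm{x}$-component is bounded. By Lemma~\ref{fenliangyoujie}, the $\bb{w}$-component is bounded, and $t\in(0,1]$. It therefore remains to bound the multipliers $(\bm{u},\bm{v})$. I argue by contradiction: suppose there is a sequence $(\bm{x}^k,\bb{w}^k,\bm{u}^k,\bm{v}^k,t_k)\in\Gamma_{\omega^0}$ with $\|(\bm{u}^k,\bm{v}^k)\|\to\infty$. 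Passing to a subsequence, I may assume $\bm{x}^k\to\bm{x}^*\in\Omega$, $\bb{w}^k\to\bb{w}^*$ and $t_k\to t^*\in[0,1]$.

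\emph{Case $t^*=1$.} Consider the third block, $u_i^k g_i(\bm{x}^k)=t_k u_i^0 g_i(\bm{x}^0)$. If $u_i^k\to\infty$, then $g_i(\bm{x}^k)\to 0$ while the right-hand side tends to $u_i^0 g_i(\bm{x}^0)$. This forces $u_i^0=0$, in which case $u_i^k g_i(\bm{x}^k)=0$, and so $u_i^k=0$ whenever $g_i(\bm{x}^k)\neq0$. Dividing the first block by $\|(\bm{u}^k,\bm{v}^k)\|$ and taking limits, the terms with bounded coefficients vanish, giving $\sum_{i\in I(\bm{x}^*)}\bar u_i\nabla g_i(\bm{x}^*)+\sum_j\bar v_j\nabla h_j(\bm{x}^*)=0$ with $(\bar{\bm{u}},\bar{\bm{v}})\neq0$. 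Here $\bar u_i=0$ for inactive $i$, because $u_i^k g_i(\bm{x}^k)$ stays bounded while $g_i(\bm{x}^*)<0$. This contradicts (B). The same normalization argument also removes the possibility that only $\bm{v}^k$ blows up while $\bm{u}^k$ stays bounded, for any $t^*$.

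\emph{Case $t^*\in[0,1)$.} This is the main case. From $U^k\bm{g}(\bm{x}^k)=t_kU^0\bm{g}(\bm{x}^0)$, every index with $u_i^k\to\infty$ satisfies $g_i(\bm{x}^k)\to0$, so it is active at $\bm{x}^*$; the other components remain bounded. Hence $\bm{x}^*\in\partial\Omega$ if $\bm{u}^k$ is unbounded; if only $\bm{v}^k$ blows up, the LICQ argument above already gives a contradiction. I rewrite the first block as
\[
\bm{x}^0=\bm{x}^k+\frac{1}{t_k}\Big[(1-t_k)\big(\bb{J}_{\bm f}(\bm{x}^k)^T\bb{w}^k+\bb{J}_{\bm g}(\bm{x}^k)^T\bm{u}^k\big)+\bb{J}_{\bm h}(\bm{x}^k)^T\bm{v}^k\Big].
\]
When $t^*>0$, the coefficients $(1-t_k)u_i^k/t_k\ge0$ on the active indices diverge, and the bounded terms converge. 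Using LICQ at $\bm{x}^*$, the divergent combination must cancel in the limit, again contradicting (B), unless the normalized direction vanishes. A cleaner route is to show that, in the limit, $\bm{x}^0$ lies in the cone set $\{\bm{x}^*+\sum_{I(\bm{x}^*)}u_i\nabla g_i(\bm{x}^*)+\sum v_j\nabla h_j(\bm{x}^*)\}$, after absorbing the bounded $f$ and inactive-$g$ terms. Choosing $\bm{x}^0\in\hat\Omega$ then contradicts the first part of (C). The absorption of the bounded terms is the step I expect to need care. When $t^*=0$, dividing by $\|(\bm{u}^k,\bm{v}^k)\|$ kills both $t_k(\bm{x}^k-\bm{x}^0)$ and the $f$-term, and yields a nontrivial vanishing combination of active gradients, which contradicts (B) directly.

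I expect the main obstacle to be making the $0<t^*<1$ subcase rigorous. The scale at which the divergent multiplier terms balance $\bm{x}^0-\bm{x}^k$ must be reconciled with the bounded objective-gradient term, so that the limit genuinely lands in the normal-cone set of (C) rather than in a perturbation of it. If a direct absorption fails, I would fall back on normalization, which gives a contradiction with LICQ whenever the multipliers dominate. The only residual case is then bounded multipliers, and that case is exactly what we want.
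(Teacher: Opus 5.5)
There is a genuine gap, and it is in the case $t^*=1$. That is exactly where hypothesis (C) is indispensable and where the paper invokes it.

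\textbf{The two steps at $t^*=1$ do not hold.} First, the inference that $u_i^k\to\infty$ and $g_i(\bm{x}^k)\to 0$ ``force'' $u_i^0=0$ is a non sequitur. A divergent factor times a vanishing one can converge to any nonpositive number, in particular to $u_i^0 g_i(\bm{x}^0)<0$. Second, your normalization fails there. In the first block the inequality multipliers appear only as $(1-t_k)\bb{J}_{\bm{g}}(\bm{x}^k)^T\bm{u}^k$. After dividing by $\|(\bm{u}^k,\bm{v}^k)\|$ this term tends to $0$, and the limit gives only $\bb{J}_{\bm{h}}(\bm{x}^*)^T\bar{\bm{v}}=0$, i.e.\ $\bar{\bm{v}}=0$. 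That is no contradiction when $\bar{\bm{u}}\neq 0$. So LICQ alone cannot exclude $\bm{u}^k\to\infty$ with $(1-t_k)\bm{u}^k$ bounded as $t_k\to 1$.

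\textbf{The repair} is essentially the paper's argument, made rigorous.
A divergent $u_i^k$ forces $g_i(\bm{x}^*)=0$, so $\bm{x}^*\in\partial\Omega$. Put $\tilde{\bm{u}}^k=(1-t_k)\bm{u}^k$.
If $\|(\tilde{\bm{u}}^k,\bm{v}^k)\|\to\infty$, normalize by this quantity; the $g$-term now has coefficient one, and LICQ gives the contradiction.
Otherwise pass to limits $\tilde{\bm{u}}^k\to\tilde{\bm{u}}\ge 0$ and $\bm{v}^k\to\tilde{\bm{v}}$. Here $\tilde{\bm{u}}$ vanishes off $I(\bm{x}^*)$, because inactive $u_i^k$ stay bounded. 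Since the $\bb{J}_{\bm{f}}$ term carries the factor $1-t_k\to 0$, the first block becomes
\[
\bm{x}^0=\bm{x}^*+\sum_{i\in I(\bm{x}^*)}\tilde u_i\nabla g_i(\bm{x}^*)+\sum_{j\in J}\tilde v_j\nabla h_j(\bm{x}^*).
\]
With $\bm{x}^0\in\hat\Omega$, this contradicts the first part of (C). The paper writes $\lim_k (1-t_k)u_i^k$ directly; the LICQ split above is what justifies that this limit is finite.

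\textbf{The case $0<t^*<1$ is easy.} You single it out as the main obstacle, but it is as easy as $t^*=0$. Since $1-t_k\to 1-t^*>0$, dividing by $\|(\bm{u}^k,\bm{v}^k)\|$ yields $(1-t^*)\bb{J}_{\bm{g}}(\bm{x}^*)^T\bar{\bm{u}}+\bb{J}_{\bm{h}}(\bm{x}^*)^T\bar{\bm{v}}=0$. Here $(\bar{\bm{u}},\bar{\bm{v}})$ is a unit vector supported on active indices, so this contradicts (B) outright. The paper treats all $t_*\in[0,1)$ this way, so your hedge ``unless the normalized direction vanishes'' is unnecessary.

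The (C)-based ``cleaner route'' you propose for this case would not work anyway. The coefficients $(1-t_k)u_i^k/t_k$ diverge, so no finite cone point appears in the limit. Also, the term $\frac{1-t^*}{t^*}\bb{J}_{\bm{f}}(\bm{x}^*)^T\bb{w}^*$ survives and is not of normal-cone form. In short, you applied (C) in the case that does not need it, and omitted it from the case that does.
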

See Appendix \ref{youjieappen} for the proof.

\begin{theorem}\label{conver}
$($Global Convergence$)$ Suppose that the conditions (A),
$(B)$, and (C) hold. Then for almost all $\omega^0\in
\Omega^0\times  W^{++} \times \mathbb{R}^{m}_{++}\times
\mathbb{R}^s,$ the zero-point set $H_{\omega^0}^{-1}(0)$ of the
homotopy map (\ref{homotopy}) contains a smooth curve
$\Gamma_{\omega^0}\subset \Omega \times
\mathbb{R}^{p+m}_{+}\times (0,1],$ which starts from $(\omega^0,1).$
As $t\rightarrow 0,$ the limit set ~$T\times \{0\}\subset
\Omega \times \mathbb{R}^{p+m}_{+}\times \{0\}$ of
$\Gamma_{\omega^0}$ is nonempty, and every point in $T$ is a
solution of (\ref{KKT}).
\end{theorem}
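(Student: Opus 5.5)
The plan is the standard homotopy global convergence argument: Lemma~\ref{zhengze} gives a smooth curve, Lemmas~\ref{fenliangyoujie}--\ref{youjie} give boundedness, and a limiting argument identifies the endpoints.

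\textbf{Step 1 (existence of the curve).} By Lemma~\ref{zhengze}, for almost all $\omega^0$, $0$ is a regular value of $H_{\omega^0}$. Hence $H_{\omega^0}^{-1}(0)$ is a union of smooth one-dimensional manifolds, and one of them, $\Gamma_{\omega^0}$, starts at $(\omega^0,1)$. By the classification of compact-or-not one-manifolds, $\Gamma_{\omega^0}$ is diffeomorphic either to a circle or to an interval.

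\textbf{Step 2 (excluding a return to $t=1$ and excluding a circle).} The Jacobian $\partial H/\partial\omega$ at $(\omega^0,1)$ is nonsingular. To verify this, note that at $t=1$ the system (\ref{eq:t1_system}) is block triangular in $(x,v)$, $u$ and $\mathrm{w}$. Its blocks are:
\begin{itemize}
\item the $(x,v)$ block $\begin{bmatrix} I & \mathbf J_h^T\\ \mathbf J_h & 0\end{bmatrix}$, which is nonsingular by LICQ;
\item the $u$ block $\operatorname{diag}(g(x^0))$, which is nonsingular because $g(x^0)<0$;
\item the $\mathrm{w}$ block $\tfrac32\operatorname{diag}((\mathrm{w}^0)^{1/2})$, which is nonsingular because $\mathrm{w}^0>0$.
\end{itemize}
So $\Gamma_{\omega^0}$ is transversal to $t=1$. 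Since the solution at $t=1$ is unique (shown after (\ref{eq:t1_system})), the curve cannot return to $t=1$, and it is not a closed loop.

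\textbf{Step 3 (the curve stays in the admissible region).} We must show that $\Gamma_{\omega^0}$ stays in $\Omega\times\mathbb R^{p+m}_+\times\mathbb R^s$ for $t\in(0,1]$. From $U g(x)=tU^0g(x^0)$, with $u^0>0$ and $g(x^0)<0$, every component satisfies $u_ig_i(x)=t\,u_i^0g_i(x^0)<0$ for $t>0$. Hence along the connected curve $u_i$ and $g_i(x)$ cannot vanish, so $u>0$ and $g(x)<0$ by continuity from the starting point. In particular $x\in\Omega^0$. For the weights, the last block gives, componentwise, $t(\mathrm{w}_i^{3/2}-(\mathrm{w}_i^0)^{3/2})=(1-t)(1-\sum \mathrm{w}_k)$. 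Lemma~\ref{fenliangyoujie}, together with this identity, keeps $\mathrm{w}$ in the region where the real power is defined; I expect the construction to force $\mathrm{w}\ge 0$.

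\textbf{Step 4 (boundedness forces the curve to reach $t=0$).} Lemma~\ref{youjie} gives boundedness of $\Gamma_{\omega^0}$. The curve is a bounded non-closed one-manifold that cannot end at $t=1$ and cannot end in the interior (by regularity). It also cannot escape to the boundary $g_i=0$ or $u_i=0$ while $t>0$, by Step 3. Therefore its closure must meet $t=0$, so the limit set $T\times\{0\}$ is nonempty.

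\textbf{Step 5 (limit points solve the KKT system).} Take $(\omega^k,t_k)\to(\bar\omega,0)$ along the curve. Passing to the limit in $H=0$ by continuity gives, from the four blocks, (\ref{eq:KKT_stationarity}), (\ref{eq:KKT_complementarity}), (\ref{eq:KKT_feasibility}) and $\sum\bar{\mathrm{w}}_i=1$, respectively. Closedness gives $\bar x\in\Omega$, $\bar u\ge0$ and $\bar{\mathrm w}\ge 0$. Together with $\sum\bar{\mathrm w}_i=1$ this yields $\bar{\mathrm w}\ne0$, so $\bar\omega$ solves (\ref{KKT}).

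\textbf{Main obstacle.} The delicate step is Step 4, specifically ruling out that the curve accumulates at a point with $t>0$ on $\partial\Omega$ or with some $u_i\to 0$. I would handle this via the sign identity $u_ig_i=tu_i^0g_i(x^0)$. That identity keeps both factors bounded away from zero on compact $t$-ranges bounded away from $0$. Combined with boundedness, this forces any limit point with $t>0$ to lie in the regular set, where the implicit function theorem extends the curve; this is a contradiction.
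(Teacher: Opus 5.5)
Your outline matches the paper's proof in most respects: the nonsingular $\partial H/\partial\omega$ at $(\omega^0,1)$ and uniqueness at $t=1$ make $\Gamma_{\omega^0}$ a half-open arc, the identity $u_ig_i(x)=t\,u_i^0g_i(x^0)$ keeps limit points with $t>0$ off the faces $u_i=0$ and $g_i=0$, and continuity handles $t=0$. There is, however, a genuine gap: nothing in your argument excludes that $\Gamma_{\omega^0}$ ends at a point with $\bar t\in(0,1)$ and some $\bar{\text{w}}_j=0$.

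The face $\{\text{w}_j=0\}$ is part of the boundary of $M=\Omega\times\mathbb{R}^{p+m}_+\times\mathbb{R}^s$. If the path reached it at positive $t$, it would terminate there, since the map $\text{w}\mapsto\text{w}^{3/2}$ is not even defined beyond it. The limit set at $t=0$ could then be empty. In Step~3 you only say you ``expect'' the construction to force $\text{w}\ge 0$, but nonnegativity is already imposed by $M$. What is needed is that the path stays \emph{away} from $\text{w}_j=0$, and Lemma~\ref{fenliangyoujie} gives only boundedness of $\text{w}$, not this. Step~4 and your ``main obstacle'' paragraph treat only $g_i=0$ and $u_i=0$. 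This is exactly Case (b)(3) of the paper's proof, which the paper handles separately.

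The missing argument is short. Componentwise, the last block of (\ref{homotopy}) reads
\[
\text{w}_i^{3/2}-(\text{w}_i^0)^{3/2}=c,\qquad c:=\frac{1-t}{t}\Bigl(1-\sum_{k=1}^p\text{w}_k\Bigr),\qquad i=1,\dots,p,
\]
with the \emph{same} $c$ for every $i$. For $t\in(0,1)$, $c$ has the sign of $1-\sum_k\text{w}_k$. Also, $s\mapsto s^{3/2}$ is strictly increasing on $[0,\infty)$ and $\sum_k\text{w}_k^0=1$.
\begin{itemize}
\item If $c>0$, then $\text{w}_i>\text{w}_i^0$ for all $i$, hence $\sum_k\text{w}_k>1$ and $c<0$, a contradiction.
\item If $c<0$, then $\text{w}_i<\text{w}_i^0$ for all $i$, hence $\sum_k\text{w}_k<1$ and $c>0$, again a contradiction.
\end{itemize}
Thus $c=0$, i.e.\ $\text{w}\equiv\text{w}^0\in W^{++}$ along the entire path; at $t=1$ this is immediate. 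Consequently every limit point has $\bar{\text{w}}=\text{w}^0>0$. This rules out the weight face, and it also gives $\sum_i\bar{\text{w}}_i=1$ and $\bar{\text{w}}\neq 0$ in Step~5 directly.

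With this inserted, the rest of your outline goes through and is essentially the paper's argument. The paper reaches the same contradiction by a longer route, via $\sum_i\bar{\text{w}}_i>1$ and $\bar{\text{w}}_j^{3/2}=(\text{w}_j^0)^{3/2}-(\text{w}_{j_0}^0)^{3/2}$.
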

See Appendix \ref{converappen} for the proof.

Constructing an appropriate homotopy map is critical for ensuring global convergence when solving nonlinear optimization problems. The homotopy function should smoothly interpolate between an initial, well-posed system and the original KKT system. The selection of this initial system plays a pivotal role in convergence performance and numerical stability.

Even though the theoretical global convergence of the homotopy method is well established for the proposed homotopy map, implementing it in practice presents several algorithmic challenges. These difficulties arise due to issues such as path tracking instability, numerical precision errors, slow convergence near singularities, and handling constraints effectively.
To avoid singularities in the Jacobian matrix, we incorporated Regularization techniques -- \emph{predictor–corrector} -- in our implementation of the Newton-based correction steps. In the predictor step, a tangent direction is estimated using Euler or arc-length continuation to advance the parameter $t$. The corrector step then refines this approximation by applying Newton’s method to project the iterate back onto the homotopy manifold. These adjustments help avoid step jumps and improve numerical stability.
To ensure the numerical path-following algorithm accurately traces the homotopy trajectory without diverging or stalling, 
adaptive step-size control is employed. This further prevents path bifurcations and maintain stability throughout the continuation process. 
This continuation-based formulation provides a robust alternative to classical gradient-based optimization techniques. By embedding the weighting parameters directly within the homotopy mapping, the method preserves the intrinsic multiobjective structure of the problem while offering theoretical guarantees of convergence to Pareto-optimal KKT points. The algorithm used can be found in Algorithm\,\ref{alg:homotopy}.

\begin{algorithm}[t!]
\caption{Homotopy Continuation Method (Predictor-Corrector)}
\label{alg:homotopy}
\begin{algorithmic}[1]
    \STATE Objectives $\bm{f} \in \R^p$, Constraints $\bm{g} \in \R^m,~ \bm{h} \in \R^s$
    \STATE Initial guess $\bm{x_0} \in \R^n$, step size $\alpha_0$, tolerance $\epsilon$, and parameters $u_0, v_0, t_0$
    \STATE Weights $\bb{W} = \{\bb{w_1}, \bb{w_2}, \dots, \bb{w_p}\}$, Pareto optimal points $P_{set} \gets \emptyset$
\FOR{\textbf{each} $\bb{w} \in \bb{W}$}
    \STATE Define state vector $\omegaVec \gets [\bm{x_0}, \bb{w}, \bm{u_0}, \bm{v_0}, t_0]^T$
    \STATE $k \gets 1$   
    \WHILE{$|t| > \epsilon$ \textbf{and} $k \leq k_{max}$}
        \STATE Evaluate the homotopy and its Jacobian:
      \[
        H \gets H(\omegaVec),\quad
        J_H \gets D H(\omegaVec).
      \]
        \STATE \textbf{Phase 1: Prediction (Tangent Step)}
        \STATE Compute tangent vector $\xi \in \text{null}(J_H)$ such that $J_H \cdot \bm{\xi} = 0$
        \STATE Normalize: $\xi \gets \xi / \norm{\xi}$
        
        \STATE \textit{Orientation Check:}
        \IF{$\det\!\left( \begin{bmatrix} J_H \\ \bm{\xi}^\top \end{bmatrix} \right) < 0$}
            \STATE $\bm{\xi} \gets -\bm{\xi}$
        \ENDIF       
        \STATE Predict next point: $\omegaVec_{pred} \gets \omegaVec + \alpha \cdot \bm{\xi}$
        \STATE Adapt step size:
        \[
            \alpha =
            \begin{cases}
                \min(\alpha_0, 2\alpha), & \text{if } \|H\| < 0.01 \\
                \max(\alpha_0, \alpha_0/2), & \text{if } \|H\| > 1
            \end{cases}
        \]
        \STATE \textbf{Phase 2: Correction (Newton-Raphson)}
        \STATE Evaluate $H(\omegaVec_{pred})$ and Jacobian $J_{pred} = DH(\omegaVec_{pred})$       
        \STATE Solve minimum norm correction (Moore-Penrose):
        \STATE \quad $\bm{y} \gets (J_{pred} J_{pred}^T)^{-1} H(\omegaVec_{pred})$
        \STATE \quad $\Delta \omegaVec \gets -J_{pred}^T \bm{y}$
        
        \STATE Update state: $\omegaVec \gets \omegaVec_{pred} + \Delta \omegaVec$
        \STATE Extract $t$ from $\omegaVec$
        \STATE $k \gets k + 1$
    \ENDWHILE   
    \IF{$|t| \leq \epsilon$}
        \STATE Extract $ \bm{x^*}$ from $\omegaVec$
        \STATE $P_{set} \gets P_{set} \cup \{ \bm{x^*}, \bm{f}(\bm{x^*})\}$
    \ENDIF
\ENDFOR
\STATE $P_{set}$
\end{algorithmic}
\end{algorithm}
\begin{algorithm}[t!]
\caption{Weighted Sum Method (WSM) for Pareto Front Generation}
\label{alg:wsm}
\begin{algorithmic}[1]
    \item[]  \hspace{-2em} \textbf{Require:} Objective functions $\{f_i(\bm{x})\}_{i=1}^p$, feasible set $\Omega$, 
        weight set $\mathcal{W} \subset \{\bm{w} \in \mathbb{R}^p \mid w_i \ge 0, \ \sum_{i=1}^p w_i = 1\}$
    \item[] \hspace{-2em} \textbf{Ensure:} Approximate Pareto set $\mathcal{P}_{\text{WSM}}$ and Pareto front $\mathcal{F}_{\text{WSM}}$
    \STATE $\mathcal{P}_{\text{WSM}} \gets \emptyset$ \{\emph{Set of decision vectors}\}
    \STATE $\mathcal{F}_{\text{WSM}} \gets \emptyset$ \{\emph{Set of objective vectors}\}
    \FOR{$\bm{w} \in \mathcal{W}$}
        \STATE Define scalarized objective:
        \[
            F(\bm{x},\bm{w}) = \sum_{i=1}^{p} w_i f_i(\bm{x})
        \]
        \STATE Solve the single-objective problem
        \[
            \min_{\bm{x} \in \Omega} F(\bm{x},\bm{w})
        \]
        and obtain a solution $\bm{x}^{\star}(\bm{w})$
        \STATE Compute objective vector 
        \[
            \bm{f}^{\star}(\bm{w}) = \bigl(f_1(\bm{x}^{\star}(\bm{w})),\dots,f_p(\bm{x}^{\star}(\bm{w}))\bigr)
        \]
        \STATE Add solution to candidate sets:
        \[
            \mathcal{P}_{\text{WSM}} \gets \mathcal{P}_{\text{WSM}} \cup \{\bm{x}^{\star}(\bm{w})\}, \quad
            \mathcal{F}_{\text{WSM}} \gets \mathcal{F}_{\text{WSM}} \cup \{\bm{f}^{\star}(\bm{w})\}
        \]
    \ENDFOR
    \STATE \textbf{return} $\mathcal{P}_{\text{WSM}}, \mathcal{F}_{\text{WSM}}$
\end{algorithmic}
\end{algorithm}
\begin{algorithm}[b!]
\caption{$\epsilon$-Constraint Method (ECM) for Pareto Front Generation}
\label{alg:ecm}
\begin{algorithmic}[1]
    \item[]  \hspace{-2em} \textbf{Require:} Objective functions $\{f_i(\bm{x})\}_{i=1}^p$, feasible set $\Omega$, index $m$ of the primary objective $f_m$, \\
    \hspace{2.8em} grid of $\epsilon$-vectors $\{\boldsymbol{\epsilon}^{(j)}\}_{j=1}^{N}$ 
        where 
        $\boldsymbol{\epsilon}^{(j)} = (\epsilon_1^{(j)},\dots,\epsilon_p^{(j)})$ 
        and $\epsilon_m^{(j)}$ is unused.
        
    \item[] \hspace{-2em} \textbf{Ensure:} Approximate Pareto set $\mathcal{P}_{\epsilon}$ and Pareto front $\mathcal{F}_{\epsilon}$
    \STATE $\mathcal{P}_{\epsilon} \gets \emptyset$ \{\emph{Set of decision vectors}\}
    \STATE $\mathcal{F}_{\epsilon} \gets \emptyset$ \{\emph{Set of objective vectors}\}
    \FOR{each $\boldsymbol{\epsilon}^{(j)}$ in the grid}
        \STATE Define constrained problem:
        \[
            \begin{aligned}
                \min_{\bm{x} \in \Omega} \quad & f_m(\bm{x}) \\
                \text{s.t.} \quad & f_k(\bm{x}) \le \epsilon_k^{(j)}, \quad k = 1,\dots,p, \ k \neq m
            \end{aligned}
        \]
        
        \STATE Solve the above single-objective problem and obtain a solution $\bm{x}^{\star}(\bm{\epsilon}^{(j)})$
        \STATE Compute objective vector 
        \[
            \bm{f}^{\star}(\bm{\epsilon}^{(j)}) = \bigl(f_1(\bm{x}^{\star}),\dots,f_p(\bm{x}^{\star})\bigr)
        \]
        \STATE Update candidate sets:
        \[
            \mathcal{P}_{\epsilon} \gets \mathcal{P}_{\epsilon} \cup \{\bm{x}^{\star}(\bm{\epsilon}^{(j)})\}, \quad
            \mathcal{F}_{\epsilon} \gets \mathcal{F}_{\epsilon} \cup \{\bm{f}^{\star}(\bm{\epsilon}^{(j)})\}
        \]
    \ENDFOR
    \STATE \textbf{return} $\mathcal{P}_{\epsilon}, \mathcal{F}_{\epsilon}$
\end{algorithmic}
\end{algorithm}

\subsection{The Weighted Sum Method (WSM)}
The Weighted Sum Method (WSM) is arguably the most straightforward scalarization technique, transforming MOO into a single objective function by taking a linear convex combination of the objectives \cite{marler2004survey, van2012evaluation}. 
Given the $p$ objectives $f_i(\bm{x})$, the WSM formulation is defined by minimizing the aggregated objective function $F(\bm{x},\bb{w})$:
\begin{mini!}{\bm{x} \in \Omega}{F(\bm{x},\bb{w}) = \sum_{i=1}^{p} w_i f_i(\bm{x})}{}{}\label{wsm}\tag{5}
\end{mini!}
The weight vector $\mathbf{w} = [w_1, w_2, \dots, w_p]^T$ is selected by the decision-maker, where each weight $w_i$ must be non-negative ($w_i \geq 0$) and the weights typically sum to unity ($\sum\limits_{i=1}^{p} w_i = 1$). Under these conditions, minimizing\,\eqref{wsm} is sufficient to guarantee Pareto optimality. The single-objective optimization problem is solved using the Sequential Least Squares Programming (SLSQP) algorithm\,\cite{nocedal2006numerical}. By systematically varying the weight vector, a corresponding set of Pareto-optimal solutions can be generated.
%

\subsection{The $\epsilon$-Constraints Method (ECM)}
The $\epsilon$-Constraints Method (ECM) overcomes the convexity limitation of the WSM by maintaining one objective function as the primary target for minimization while converting all remaining $p-1$ objectives into inequality constraints\,\cite{deb2016multi}.
Assuming objective $f_m(\bm{x})$ is selected as the primary objective for minimization, the ECM is formulated as a single-objective problem subject to bounds defined by $\epsilon$:
\begin{mini!}{\bm{x} \in \Omega}{f_m(\bm{x})}{\label{eq:epsilon-constraint}}{}
\addConstraint{f_k(\bm{x}) \leq \epsilon_k, \quad \text{for } k=1, \dots, p, \quad k \neq m}{}.
\end{mini!}
Here, $\epsilon_k$ represents the maximum acceptable value for the constrained objective $f_k$. It can  be shown that a solution of \eqref{eq:epsilon-constraint} is weakly Pareto optimal. Furthermore, a solution $\bm{x}^*$ is Pareto optimal if and only if it is a solution of \eqref{eq:epsilon-constraint} for every $m = 1, \cdots, p$, where $\epsilon_k = f_k(\bm{x}^*)$ for $k=1, \cdots, p, k\neq m$. By iteratively minimizing $f_m(\mathbf{x})$ over a range of systematically varied $\epsilon_k$ values, the full Pareto front can be traced, including non-convex sections.
%

\subsection{The Global Criterion Method (GCM)}
The Global Criterion Method (GCM), also known as the Goal Programming or Compromise Programming method, transforms MOO into minimizing the distance from an unattainable ``Ideal Point" ($\bm{f}^*$). The ideal point $\bm{f}^*$ is found by individually optimizing each objective function $f_i(\bm{x})$.
The GCM minimizes the deviation from the ideal point $\bm{f}^*$ using an $L_p$-norm distance metric:
\begin{mini!}{\bm{x} \in \Omega}{D_p(\bm{x}) = \left( \sum_{i=1}^{M} \left| \frac{f_i(\bm{x}) - f_i^*}{s_i} \right|^p \right)^{1/p}}{}{}
\end{mini!}
where $f_i^*$ is the ideal value (minimum) for objective $i$, and $s_i$ is a normalization term, typically the range of the objective function ($s_i = f_i^{\max} - f_i^{\min}$) \cite{marler2004survey}. The use of $p=2$, corresponding to the $L_2$-norm (or Euclidean norm), is the most common approach, minimizing the geometric distance in the objective space. Despite its popularity and wide range of applications, there is no guarantee that it provides a Pareto optimal solution.  
\begin{algorithm}[H]
\caption{Global Criterion Method (GCM) Based on Ideal Point Deviation}
\label{alg:gcm}
\begin{algorithmic}[1]
    \item[]
    \item[] \hspace{-2em} \textbf{Require:} Objective functions $\{f_i(\bm{x})\}_{i=1}^p$, feasible set $\Omega$, norm parameter $p \ge 1$
    \item[] \hspace{-2em} \textbf{Ensure:} Approximate Pareto solution $\bm{x}^{\star}$ and objective vector $\bm{f}^{\star}$
   \item[]
    \item[] \hspace{-1.6em} \textbf{Compute Ideal Values}    
    \FOR{$i = 1,\dots,p$}
        \STATE Solve
        \[
            f_i^* = \min_{\bm{x} \in \Omega} f_i(\bm{x})
        \]
        and store the solution $\bm{x}_i^*$
    \ENDFOR
    \STATE Estimate scaling factors $s_i$ (e.g., $s_i = f_i^{\max} - f_i^{\min}$ using bounds or sampled values)
    \item[]
    \item[] \hspace{-1.6em} \textbf{Define Global Criterion}
    \STATE Define the scalar deviation measure
    \[
        D_p(\bm{x}) = 
        \left( 
            \sum_{i=1}^{p} 
            \left| 
                \frac{f_i(\bm{x}) - f_i^*}{s_i} 
            \right|^p 
        \right)^{1/p}
    \]
    \item[] \hspace{-1.6em} \textbf{Solve the Global Criterion Problem}
    \STATE Solve
    \[
        \min_{\bm{x} \in \Omega} D_p(\bm{x})
    \]
    to obtain $\bm{x}^{\star}$
    \STATE Compute the corresponding objective vector
    \[
        \bm{f}^{\star} = \bigl(f_1(\bm{x}^{\star}),\dots,f_p(\bm{x}^{\star})\bigr)
    \]
    \STATE \textbf{return:} $\bm{x}^{\star},\; \bm{f}^{\star}$
\end{algorithmic}
\end{algorithm}

\subsection{The Lexicographic Method}
The Lexicographic Method is an \textit{a priori} approach characterized by the decision-maker assigning an absolute priority ranking to the objectives. This ranking dictates that the most important objective must be optimized first, followed sequentially by the next most important, subject to maintaining the optimal result of all higher-ranked objectives \cite{isermann1982linear, miettinen1999nonlinear, ehrgott2005multicriteria}.
A lexicographic minimization problem defines an ordered set of functions $\bm{f}=[f_1,f_2,\dots,f_p]$, where $f_1$ is deemed the most important and $f_p$ the least important. The problem is concisely written as:
\begin{mini!}{\bm{x} \in \Omega}{\text{lex min } [f_1(\bm{x}), f_2(\bm{x}), \dots, f_p(\bm{x})]}{}{}
\end{mini!}
This notation signifies that $f_1$ has overwhelming priority over $f_2$ ($f_1 \gg f_2$), and so forth down the hierarchy.
The solution methodology for the lexicographic problem involves solving a strict sequence of $p$ single-objective optimization problems:
\begin{enumerate}[label=\textbf{Step \arabic*:}, leftmargin=*]
\item \textbf{$k=1$:} Solve the unconstrained minimization problem for the most important objective:
$$f_1^* = \min_{\bm{x} \in \Omega} f_1(\bm{x})$$
\item \textbf{$k=2$:} Solve the problem for the second objective, constrained by the optimal value obtained in the previous step. Define a new feasible set $\Omega_2$:
$$\Omega_2 = \{ \bm{x} \in \Omega \mid f_1(\bm{x}) = f_1^* \}$$
The problem becomes: $\displaystyle f_2^* = \min_{\bm{x} \in \Omega_2} f_2(\bm{x})$.
\item \textbf{$k=3, \dots, p$:} Continue this iterative process. At step $k$, the feasible set $\Omega_k$ is constrained by the optimal values $f_1^*, f_2^*, \dots, f_{k-1}^*$, ensuring the previous optimalities are maintained as hard equality constraints.
\end{enumerate}
If any sequential problem is infeasible or unbounded, the process halts, and no solution exists for the full MOO under the specified priority structure.
\begin{algorithm}[H]
\caption{Lexicographic Sequential Solution Algorithm}
\begin{algorithmic}[1]
\STATE \textbf{Input:} Objective functions $\{f_1, f_2, \dots, f_p\}$, feasible region $\Omega$
\STATE \textbf{Initialize:} Set $k \gets 1$, $\Omega_1 \gets \Omega$
\WHILE{$k \leq p$}
    \STATE \textbf{Step 1: Solve the $k^{th}$ Optimization Problem}
    \STATE \hspace{1em} Find the optimal value: 
    \STATE \hspace{2em} $\displaystyle f_k^* = \min_{\bm{x} \in \Omega_k} f_k(\bm{x})$
    \STATE \textbf{Step 2: Update Feasible Set}
    \STATE \hspace{1em} Define the next feasible region:
    \STATE \hspace{2em} $\Omega_{k+1} = \{ \bm{x} \in \Omega_k \mid f_k(\bm{x}) = f_k^* \}$
    \STATE \textbf{Step 3: Check Feasibility}
    \STATE \hspace{1em} If $\Omega_{k+1}$ is empty or unbounded:
    \STATE \hspace{2em} \textbf{Terminate:} No feasible solution exists under the current priority structure.
    \STATE \textbf{Step 4: Increment Priority Level}
    \STATE \hspace{1em} $k \gets k + 1$
\ENDWHILE
\STATE \textbf{Step 5: Termination}
\STATE \hspace{1em} Output the solution $\bm{x}^*$ satisfying:
\STATE \hspace{2em} $f_1(\bm{x}^*) = f_1^*, ~ f_2(\bm{x}^*) = f_2^*, ~ \dots, ~ f_p(\bm{x}^*) = f_p^*$
\STATE \textbf{Output:} Lexicographically optimal solution $\bm{x}^*$
\end{algorithmic}
\end{algorithm}

\subsection{The Non-Dominated Sorting Genetic Algorithm II (NSGA-II)}
The Non-Dominated Sorting Genetic Algorithm II (NSGA-II) is an advanced, fast elitist Multi-Objective Genetic Algorithm (MOGA). It was developed to overcome the computational complexity and lack of effective elitism found in earlier non-domination-based evolutionary algorithms \cite{seshadri2006fast}. NSGA-II is a highly robust meta-heuristic approach often applied to complex, large-scale problems such as optimizing resource allocation in supply chains, where it often yields better operational outcomes than traditional methods\,\cite{acerce2025application}. 
NSGA-II utilizes an elitist approach, ensuring that the best solutions found across generations are retained. The algorithm proceeds through a standard generational cycle:
\begin{enumerate}[leftmargin=*]
\item A parent population ($P_t$) generates an offspring population ($Q_t$) via genetic operators (selection, crossover, and mutation) \cite{acerce2025application}.
\item The parent and offspring populations are combined into a larger population $R_t = P_t \cup Q_t$.
\item $R_t$ is sorted based on non-domination into different Pareto fronts ($F_1, F_2, F_3, \dots$).
\item The next generation population ($P_{t+1}$) is formed by selecting individuals sequentially, starting from the best front ($F_1$) and proceeding to subsequent fronts until the population limit $N$ is met.
\end{enumerate}
\begin{algorithm}[H]
\caption{Non-dominated Sorting Genetic Algorithm II (NSGA-II)}
\begin{algorithmic}[1]
\STATE \textbf{Input:} Population size $N$, number of generations $T$, genetic operators (selection, crossover, mutation)
\STATE \textbf{Initialize:} Generate an initial parent population $P_0$ of size $N$
\FOR{$t = 0$ to $T-1$}
    \STATE \textbf{Step 1: Generate Offspring Population}
    \STATE \hspace{1em} Apply genetic operators (selection, crossover, and mutation) on $P_t$ to produce offspring $Q_t$
    \STATE \textbf{Step 2: Combine Parent and Offspring Populations}
    \STATE \hspace{1em} Form the combined population $R_t = P_t \cup Q_t$
    \STATE \textbf{Step 3: Non-dominated Sorting}
    \STATE \hspace{1em} Sort $R_t$ into Pareto fronts: $F_1, F_2, F_3, \dots$
    \STATE \textbf{Step 4: Create the Next Generation}
    \STATE \hspace{1em} Initialize $P_{t+1} = \emptyset$
    \STATE \hspace{1em} Sequentially add individuals from $F_1, F_2, \dots$ into $P_{t+1}$ until $|P_{t+1}| = N$
    \STATE \textbf{Step 5: Apply Crowding Distance (if needed)}
    \STATE \hspace{1em} If the last front $F_k$ exceeds capacity $N$, select individuals based on crowding distance to preserve diversity
\ENDFOR
\STATE \textbf{Output:} Final Pareto-optimal set $P_T$
\end{algorithmic}
\end{algorithm}
The key to its effectiveness is a hierarchical selection strategy that relies on two primary mechanisms: the Fast Non-Dominated Sorting (FNDS) procedure for convergence, and the Crowding Distance calculation for diversity preservation \cite{van2012evaluation}.

\begin{table}[htbp]
\centering
\caption{Comparison of Multiobjective Optimization Methods.}
\label{tab:comparison_methods}
\renewcommand{\arraystretch}{1.2}
\begin{tabular}{p{2.7cm} p{2cm} p{4.8cm} p{3.2cm}}
\hline\hline
\textbf{Method} 
& \textbf{Category} 
& \textbf{Mechanism} 
& \textbf{Pareto Coverage} \\\hline
\hline
Homotopy 
& Continuation method 
& Parameterized path-following 
& Convext regions \\
\hline
Weighted Sum 
& Deterministic scalarization 
& Linear combination with weights $\mathbf{w}$ 
& Convex regions \\
\hline
$\epsilon$-Constraint 
& Deterministic scalarization 
& Objective + bounded constraints $\boldsymbol{\epsilon}$ 
& Full \\
\hline
Global Criterion 
& Deterministic scalarization 
& $L_p$-distance to ideal point $\mathbf{f}^\ast$ 
& Full \\
\hline
Lexicographic 
& Deterministic scalarization 
& Priority-based sequential optimization 
& Partial \\
\hline
\hline
NSGA-II 
& Evolutionary algorithm 
& Population-based nondominated sorting 
& Approximate \\
\hline\hline
\end{tabular}
\end{table}

Table \ref{tab:comparison_methods} summarizes representative multiobjective optimization approaches and highlights their fundamental differences in formulation, solution mechanism, and Pareto coverage. Deterministic scalarization methods, including the weighted sum, $\epsilon$-constraint, global criterion, and lexicographic methods, reduce a multiobjective problem to one or more single-objective problems and compute Pareto-stationary solutions through exact optimality conditions. The portion of the Pareto front that can be recovered by these methods depends on the chosen scalarization, with the weighted-sum formulation restricted to convex regions. Homotopy methods are not scalarization techniques themselves; rather, they are continuation strategies applied to the KKT systems of scalarized problems, and therefore inherit their Pareto coverage from the underlying formulation. In contrast, NSGA-II represents a stochastic evolutionary approach that approximates the Pareto front using population-based nondominated sorting, yielding a finite, non-deterministic approximation rather than exact Pareto-optimal solutions.

\section{Computational Experiments} \label{sec:numerical}
In this section, we test the performance of six multiobjective optimization methods -- homotopy (KKT continuation), weighted sum method (WSM), $\epsilon$-constraint method (ECM), global criterion method (GCM), lexicographic optimization, and the evolutionary algorithm NSGA-II -- on three representative multiobjective problems. All methods are implemented in Python using a consistent set of stopping tolerances, feasibility checks, and evaluation metrics to enable fair comparison. For the four deterministic scalarization approaches (weighted sum, $\epsilon$-constraint, global criterion, and lexicographic), we solve the associated constrained single-objective subproblems and record the resulting KKT (Pareto-stationary) solutions, objective values, and constraint violations. The homotopy method is implemented as a continuation scheme applied to the KKT system, starting from an easily solvable initialization and tracking the solution to the target KKT conditions. The performance of the proposed homotopy method is also compared with reported results of other variations of homotopy methods in the literature. NSGA-II is used as an a posteriori baseline that returns a finite nondominated set approximating the Pareto front. Across all three examples, we report the obtained efficient solutions, computational cost (function/gradient evaluations), and quality indicators such as the objective functions' values, highlighting how deterministic KKT-based methods provide reproducible solutions while NSGA-II provides a stochastic approximation of the Pareto front.

All numerical experiments were conducted using Google Colab, executed from a 2021 MacBook Pro equipped with an Apple M1 Pro chip, 16 GB of unified memory, and running macOS Tahoe 26.0.1. The implementations were developed in Python 3.12.12 (the default Google Colab environment at the time of the experiments) and relied on standard scientific computing libraries, including NumPy and SciPy. All computations were performed in double precision, and identical termination tolerances were applied across all methods to ensure a fair and consistent comparison of results.

\begin{example}\label{EX2}
Consider the multiobjective optimization problem (MOO):
\[
\begin{aligned}
    \min_{x \in \mathbb{R}^5} \quad 
    \begin{bmatrix}
    f_1(x) \\
    f_2(x)
    \end{bmatrix}  
    &=
    \begin{bmatrix}
    x_1^2 + x_2^2 + x_3^2 + x_4^2 + x_5^2 \\
    3x_1 + 2x_2 - \tfrac{1}{3}x_3 + 0.01(x_4 - x_5)^3
    \end{bmatrix}, \\\\
    \text{s.t.} \quad
                        \begin{bmatrix}
                            h_1(x) \\
                            h_2(x)
                        \end{bmatrix}  &= 
    \begin{bmatrix}
    4x_1 - 2x_2 + 0.8x_3 + 0.6x_4 + 0.5x_5^2 \\
    x_1 + 2x_2 - x_3 - 0.5x_4 + x_5 - 2
    \end{bmatrix}
    = 0.\\
    g(x) &= x_1^2 + x_2^2 + x_3^2 + x_4^2 - 10 \leq 0.
\end{aligned}
\]
\end{example}

\begin{figure}[h]
    \hspace*{-0.5cm}
    \centering
    \includegraphics[scale=0.35]{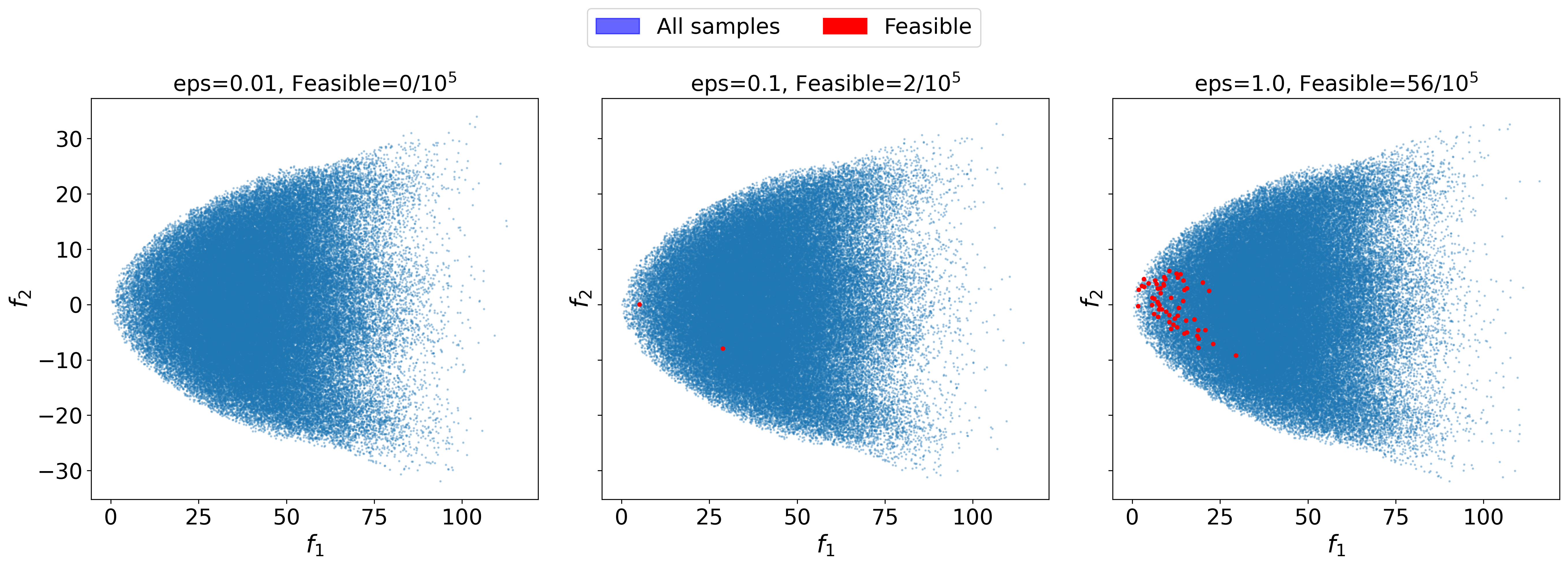}
    \caption{Example \ref{EX2}: Feasibility regions obtained via uniformly sampling $200,000$ points from the hypercube $[-5, 5]^5$. Each panel corresponds to a distinct tolerance parameter, $\epsilon = 0.01, 0.1, 1.0$. Blue points denote all samples, while red points highlight the subset that satisfies the feasibility conditions $|h_1(x)| \leq \epsilon$, $|h_2(x)| \leq \epsilon$ and $g(x) \leq 0$. }
    \label{Fig:feasibleEX2}
\end{figure}

Fig.\,\ref{Fig:feasibleEX2} shows the feasible region for Example~\ref{EX2} obtained by projecting $20{,}000$ randomly generated candidate points, sampled uniformly from the hypercube $[-5,5]^5$, onto the constraint set. 
Each panel corresponds to a distinct tolerance parameter, $\epsilon = 0.01, 0.1, 1.0$. Blue points denote all samples, while red points highlight the subset that satisfies the feasibility conditions $|h_1(x)| \leq \epsilon$, $|h_2(x)| \leq \epsilon$ and $g(x) \leq 0$. As $\epsilon$ increases, the admissible region expands, illustrating the sensitivity of the feasible manifold to the equality tolerance.
Each point represents a feasible solution in the $(f_{1}, f_{2})$ objective space, illustrating the attainable trade-offs between the two objectives. 
Even with relatively large tolerance values, the uniformly sampled design space is so vast that the proportion of nearly feasible points remains extremely small, highlighting the sparsity of the feasible region within the overall search space.

\begin{figure}[h]
    \hspace*{-0.5cm}
    \centering
    \includegraphics[scale=0.5]{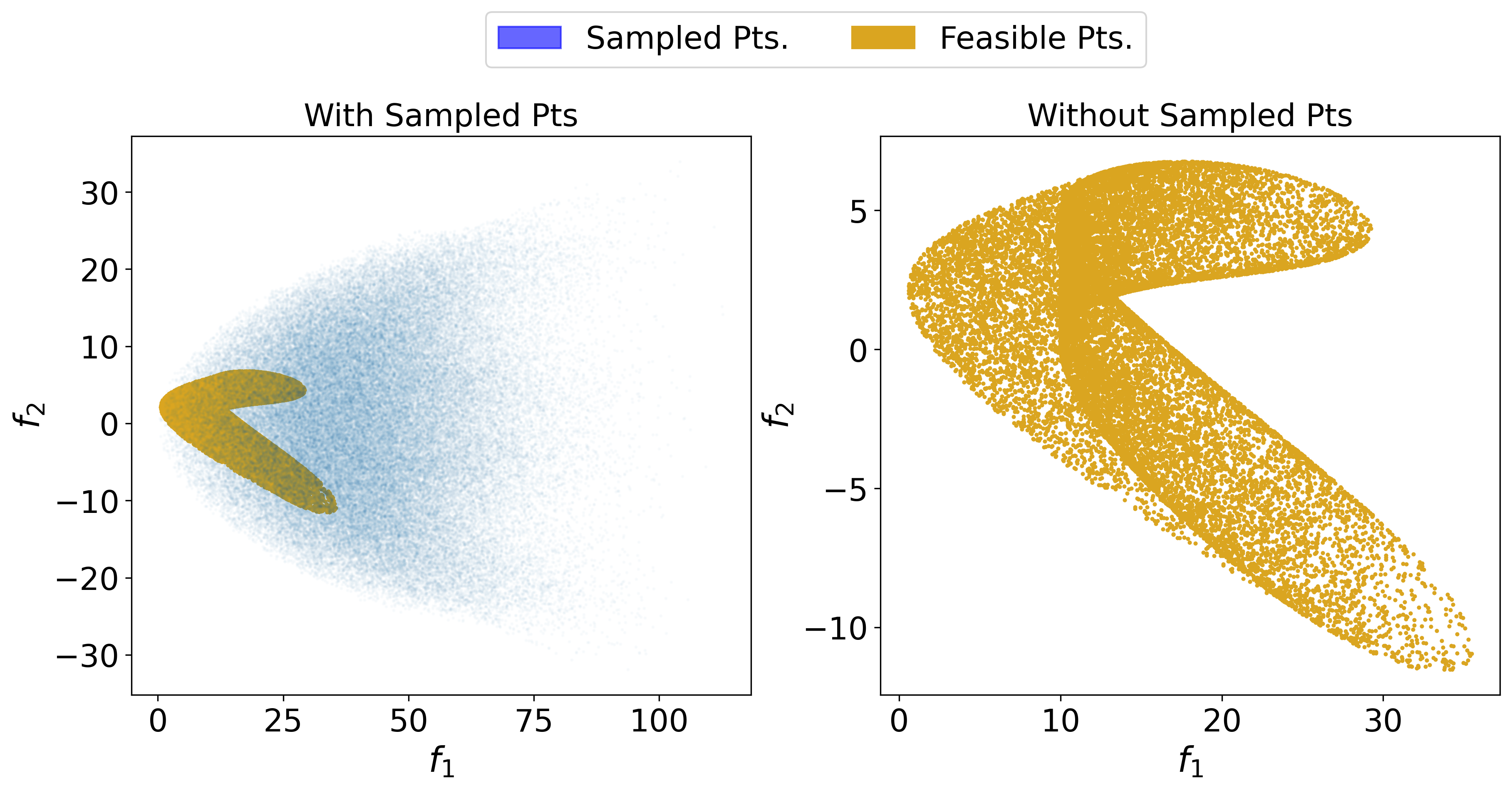}
    \caption{Example~\ref{EX2}: Blue points represent 200,000 uniformly sampled diagnostic points from the hypercube $[-5,5]^5$; golden points denote $20{,}000$ feasible candidate solutions obtained by projecting onto the constraint set. The \textbf{left} panel displays all diagnostic samples (faint blue) together with the projected feasible candidates (gold), while the \textbf{right} panel shows only the projected feasible candidates and the corresponding homotopy-based Pareto front.}
    \label{Fig:points}
\end{figure}
Due to the scarcity of feasible points obtained by direct constraint enforcement, we employ a projection-based sampling strategy to approximate the feasible decision space. Uniform random samples $\bm{x}_0 \in [-5,5]^5$ are first generated and then projected onto the feasible set by solving
\begin{equation}
\label{eq:projection-problem}
\begin{aligned}
    \min_{\bm{z} \in \mathbb{R}^5} \quad & \|\bm{z} - \bm{x}_0\|_2^2 \\
    \text{s.t.} \quad & g(\bm{z}) \le 0, \\
                      & h_1(\bm{z}) = 0, \\
                      & h_2(\bm{z}) = 0 .
\end{aligned}
\end{equation}
This problem identifies the closest feasible point to $\bm{x}_0$ in the Euclidean sense and is solved using the Sequential Least Squares Programming (SLSQP) algorithm with uniform numerical tolerances. Only solutions satisfying the inequality constraint within tolerance are retained, and the corresponding objective values and constraint residuals are recorded.

Repeating this procedure over a large number of samples yields an empirical approximation of the feasible set, which serves as a reference for subsequent Pareto front identification. In Fig.\,\ref{Fig:points}, blue points represent uniformly sampled decision variables, golden points denote feasible solutions obtained via projection. The left panel illustrates the relationship among sampled and projected, while the right panel isolates the feasible solution. This serves as a baseline to  demonstrating that the homotopy approach directly tracks the Pareto set in the next figure.

Fig.\,\ref{fig:pareto_plots_ex1} illustrate the Pareto structure and numerical results for Example\,\ref{EX2}. Fig.\,\ref{fig:pareto_plots_ex1} compares the Pareto solutions obtained by the weighted sum, proposed homotopy, global criterion, NSGA-II, lexicographic, and $\epsilon-$constraint methods. The weighted sum and homotopy approaches recover identical Pareto-stationary points, confirming that the homotopy method accurately solves the KKT system of the weighted-sum formulation. The global criterion and $\epsilon-$constraint methods generate continuous Pareto curves, while NSGA-II produces a dense but stochastic approximation of the Pareto front. In contrast, the lexicographic method yields only extreme solutions due to its strict priority structure. These results highlight the deterministic and structure-preserving nature of the homotopy approach relative to both classical scalarization techniques and evolutionary methods.

Table \ref{tab:numerical_results_shang_Zhao} and Table \ref{tab:comparison_points} 
assess the reliability of reported solutions in the literature - \cite{shang2011} and  
\cite{zhao2012quasinormal}, together with the solution obtained by our proposed homotopy method: 
\begin{itemize}
\item 
Solution reported by \cite{shang2011}, 
$(0.3077,\,0.5374,\,-0.2703,\,-0.1336,\,0.2804)$,  
achieves the optimal objectives $(f_{1}(x), f_{2}(x)) \approx (0.5530, 2.0873)$.  
This point satisfies the second equality constraint exactly 
($h_{2}(x) \approx 0$) 
and satisfies the inequality constraint with large slack 
($g(x) \approx -9.5256$).  
Only the first equality constraint shows a modest deviation 
($h_{1}(x) \approx -0.1011$).  
Hence, the point is closer to feasibility than that of Zhao et al.\ but still 
fails to satisfy all constraints simultaneously.

\begin{figure}[H]
    \hspace*{-0.5cm}
    \centering
    \includegraphics[scale=0.35]{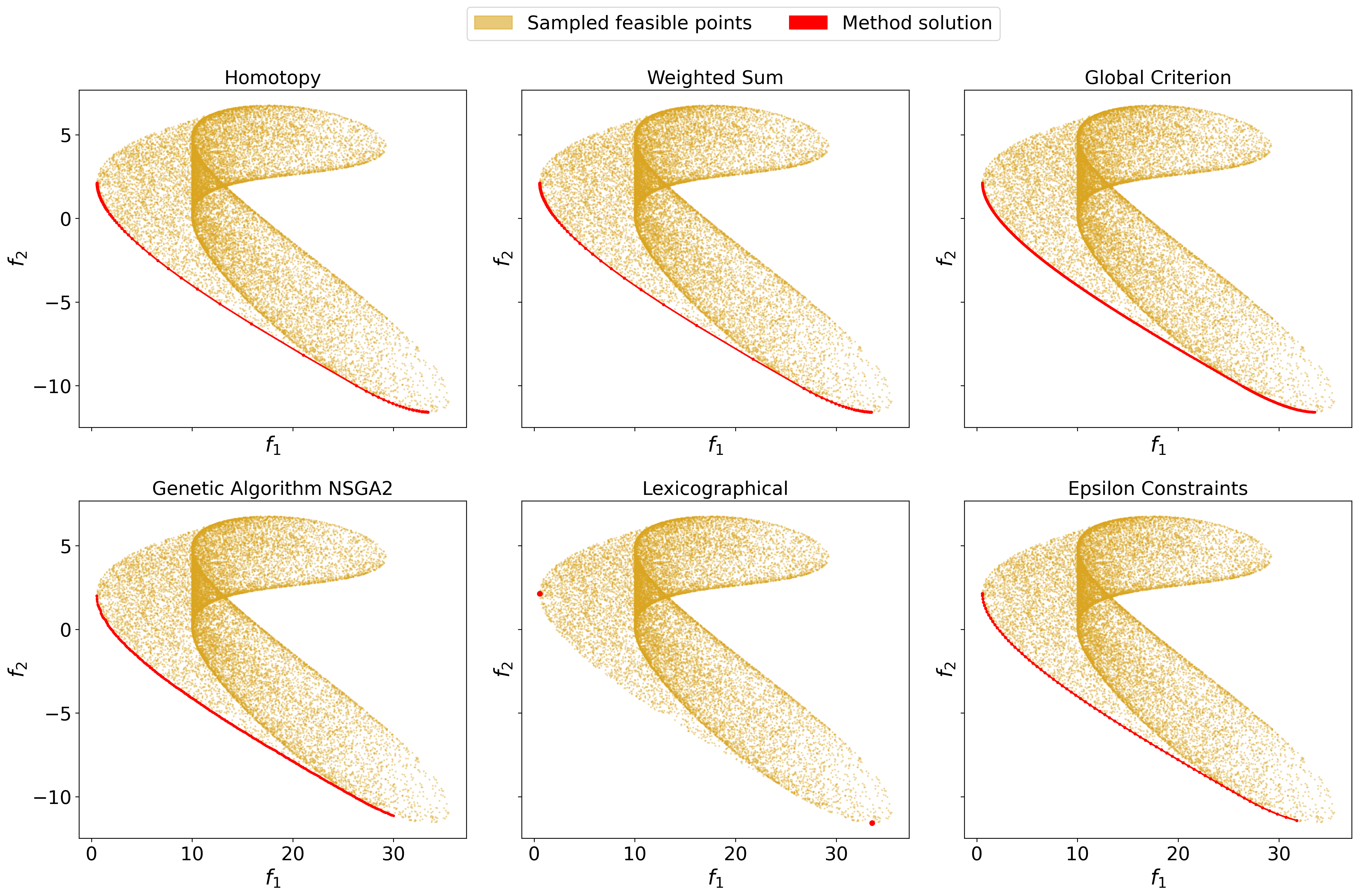}
    \caption{Example~\ref{EX2}: Feasible region obtained by projecting $20{,}000$ randomly generated candidate points, sampled uniformly from the hypercube $[-5,5]^5$, onto the constraint set. Each blue point represents a feasible solution in the $(f_{1}, f_{2})$ objective space. Pareto fronts generated by the weighted sum, proposed homotopy, global criterion, genetic algorithm NSGA-II, Lexicographic, and $\epsilon-$constraint method are denoted in red color.}
    \label{fig:pareto_plots_ex1}
\end{figure}

    \item Solution by \cite{zhao2012quasinormal},   
$(-1.3074,\,-2.8605,\,-1.0470,\,0.4103,\,0.4475)$,  
yields objective values 
$(f_{1}(x), f_{2}(x)) \approx (11.3566, -9.2942)$.  
Although the first equality constraint is nearly satisfied 
($h_{1}(x) \approx 1.08\times 10^{-4}$), 
the second equality constraint exhibits a large violation 
($h_{2}(x) \approx -7.7391$), and the inequality constraint is violated ($g(x) \approx 1.1563 > 0$).  
Thus, the point reported by Shang et al.\ is infeasible and cannot lie on the 
true Pareto set.

\item 
Our proposed homotopy method produces the solution  
$(0.3214,\,0.5131,\,-0.2773,\,-0.1405,\,0.3048)$ 
with objective values 
$(f_{1}(x), f_{2}(x))  \approx 0.5561, 2.0819)$.  
This point satisfies both equality constraints to numerical precision  ($h_{1}(x) \approx -2.0\times 10^{-4}$ and $h_{2}(x) \approx 0$) 
and strictly satisfies the inequality constraint 
($g(x) \approx -9.5368$).  
Compared to the earlier solutions in the literature, 
our method generates the only point that is fully feasible, 
demonstrating a clear improvement in constraint satisfaction and providing
a more reliable representation of the true feasible Pareto set.
\end{itemize}

\begin{table}[h]
\centering
\caption{Numerical results for Examples \ref{EX2} with $x_0 = (1, 2, 0, 1, 1)$, $\lambda = $(0.96, 0.04).}
\begin{tabular}{c c c c}
\toprule[1.5pt] \\
Method  &  $x^*$ &  $\|H_w^*\|$ \\
\midrule
\cite{shang2011}  & $ (0.3077, 0.5374, -0.2703, -0.1336, 0.2804)$ & $6.6e^{-16}$ \\ 
\cite{zhao2012quasinormal} & $(-1.3074, -2.8605, -1.0470, 0.4103, 0.4475)$ & $1.0e^{-12}$ \\ 
Proposed Homotopy  & $ (0.3214, 0.5131, -0.2773, -0.1405, 0.3048)$  & $ 7.1e^{-12}$ \\ 
\botrule[1.5pt]
\end{tabular}
\label{tab:numerical_results_shang_Zhao}
\end{table}

\begin{table}[h]
\centering
\caption{Comparison of candidate solutions reported in the literature and the solution obtained by our proposed homotopy method.}
\begin{tabular}{lcccccc}
\toprule
\textbf{Method} & $f_{1}(x)$ & $f_{2}(x)$ & $h_{1}(x)$ & $h_{2}(x)$ & $g(x)$ & Feasibility \\
\midrule
\cite{shang2011} 
& $0.5530$ 
& $2.0873$ 
& $-0.1011$ 
& $\approx 0$ 
& $-9.5256$ 
& Partially \\
\cite{zhao2012quasinormal}
& $11.3566$ 
& $-9.2942$ 
& $1.08\times 10^{-4}$ 
& $-7.7391$ 
& $1.1563$ 
& No \\
\textbf{Proposed Homotopy} 
& $\mathbf{0.5561}$ 
& $\mathbf{2.0819}$ 
& $\mathbf{-2.0\times 10^{-4}}$ 
& $\mathbf{0}$ 
& $\mathbf{-9.5368}$ 
& Yes \\
\botrule
\end{tabular}
\label{tab:comparison_points}
\end{table}

Table\,\ref{tab:4} and Fig.\,\ref{fig:exa1fig1} report the numerical results for Example~3.1 obtained using five multiobjective optimization methods. For the deterministic methods, the same initial guess $x_0=(1,2,0,1,1)$ and parameter settings are used when applicable. The table reports the computed optimal decision vector $x^*$ together with the corresponding objective values $f(x^*)=(f_1,f_2)$.
The proposed homotopy method, the weighted-sum SLSQP method, and the global criterion method converge to essentially the same solution, producing nearly identical $x^*$ and objective values clustered around the Pareto-optimal point $f(x^*)\approx(2.7,-0.41)$. The $\epsilon$-constraint method yields a nearby compromise solution with a slightly improved second objective. In contrast, NSGA-II identifies a solution with smaller values in both objectives, $f(x^*)=(2.5105,-0.4164)$, which is strictly better under the min--min formulation. As shown in Table\,\ref{tab:constraints} at the solution obtained by NSGA-II, the inequality constraint evaluates to $g(x)\approx -9.48<0$, indicating that the point lies deep inside the feasible region. Moreover, the equality constraints satisfy $h_1(x)\approx -6.8\times10^{-2}$ and $h_2(x)\approx -9.9\times10^{-2}$ and thus are not active. In contrast, evaluating the constraints at the solution produced by the proposed homotopy method,
\[
x = (-0.1390,\,-0.0518,\,-0.5309,\,-0.4189,\,1.5023),
\]
yields $h_1(x)\approx -7.36\times10^{-6}$ and $h_2(x)\approx 5.00\times10^{-5}$, indicating that the equality constraints are satisfied to near machine precision, while the inequality constraint satisfies $g(x)\approx -9.52<0$. This behavior is consistent with the proposed homotopy method converging to a KKT point of the constrained multiobjective optimization problem.

\begin{table}[h]
\centering
\caption{Numerical Results for Examples \ref{EX2} with Pareto optimal solution $f(x^*) \approx (2.7, -0.4)$ with $\bm{x}_0=(1, 2, 0, 1, 1)$ and $\lambda=(0.4, 0.6).$}
\begin{tabular}{c c c c c}
\toprule[1.5pt] \\
Method  & $x_0$ & $\lambda$ & $x^*$ & $f(x^*)$ \\
\\
\midrule
Proposed Homotopy & $\bm{x}_0$ & $\lambda$  & $ (-0.1390, -0.0518, -0.5309, -0.4189, 1.5023)$ & $(2.7363, -0.4147)$ \\ 
\\
Weighted Sum  & $\bm{x}_0$ & $\lambda$  & $(-0.1385, -0.0508, -0.5301, -0.4175, 1.5012)$ & $(2.7308, -0.4110)$ \\ 
\\
Global Criterion   & $\bm{x}_0$ & $\lambda$  & $(-0.1385, -0.0508, -0.5301, -0.4175, 1.5012)$ & $(2.7308, -0.4110)$ \\ 
\\
$\epsilon$-Constraint   & $\bm{x}_0$ & $\approx \lambda$ & $ (-0.1362, -0.0490,  -0.5295, -0.4161, 1.4966)$ & $(2.7143, -0.4000)$ \\ 
\\
Genetic Algorithm NSGA2 & --- & ---  & $(-0.1297, -0.0777, -0.5667, -0.4142, 1.4124)$ & $(2.5105, -0.4164)$ \\ 
\toprule[1.5pt]
\end{tabular}
\label{tab:4}
\end{table}

\begin{figure}[h]
\centering
\includegraphics[scale=0.65]{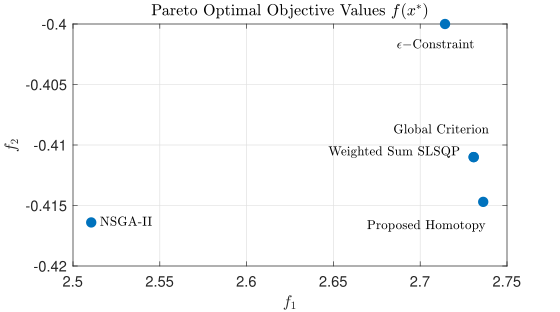}
\caption{Example~\ref{EX2}: Numerical results obtained by using five multiobjective optimization methods. For the deterministic methods, the same initial guess $x_0=(1,2,0,1,1)$ and weights $\lambda=(0.4,0.6)$ are used when applicable.}
\label{fig:exa1fig1}
\end{figure}

\begin{table}\centering
\caption{Example~\ref{EX2}: Constraint evaluation at $x^*$ obtained by using five multiobjective optimization methods.}
\label{tab:constraints}
\vspace{.5em}
\begin{tabular}{lccc}
\toprule
Method & $h_1(x^*)$ & $h_2(x^*)$ & $g(x^*)$ \\
\midrule
Proposed & & & \\
Homotopy & $-7.36\times10^{-6}$ & $5.00\times10^{-5}$ & $-9.52$ \\
Weighted Sum & $-1.18\times10^{-3}$ & $-2.30\times10^{-3}$ & $-9.52$ \\
Global Criterion & $-1.18\times10^{-3}$ & $-2.30\times10^{-3}$ & $-9.52$ \\
$\epsilon$-Constraint & $-8.73\times10^{-4}$ & $-1.55\times10^{-3}$ & $-9.52$ \\
NSGA-II & $-6.78\times10^{-2}$ & $-9.89\times10^{-2}$ & $-9.48$ \\
\botrule
\end{tabular}
\end{table}

\begin{table}[h]
\centering
\caption{Comparison of CPU time and function evaluations across methods for Example~\ref{EX2}.}
\begin{tabular}{l c c c c }
\toprule[1.5pt]
\\[-1.5ex]
\textbf{Method} & \textbf{CPU Time (s)} & \textbf{$f_1$} & \textbf{$f_2$} & \textbf{$(g, h_1, h_2)$} \\
\\[-1.5ex]
\midrule

Proposed Homotopy (Parallelized) & 1.67 &  $6{,}559^{*}$ & $6{,}559^{*}$ & $6{,}559^{*}$ \\
\\[-1.5ex]

Proposed Homotopy  & 2.10 & $6{,}559^{*}$ & $6{,}559^{*}$ & $6{,}559^{*}$ \\
\\[-1.5ex]

Weighted Sum Method  & 0.73 & $4{,}820$ & $4{,}820$ & $5{,}699$ \\
\\[-1.5ex]

$\epsilon$-Constraint Method & 1.26 & $5{,}911$ & $7{,}030$ & $7{,}052$ \\
\\[-1.5ex]

Global Criterion Method & 2.51 & $6{,}386$ & $6{,}408$ & $7{,}579$ \\
\\[-1.5ex]

Genetic Algorithm NSGA2 & 100.38 & $640{,}000$ & $640{,}000$ & $640{,}000$ \\
\\[-1.5ex]

\toprule[1.5pt]
\end{tabular}
\label{tab:cpu_feval_comparisonEX2}
\end{table}

\begin{table}[h]
\centering
\caption{Numerical results for Examples \ref{EX2} with proposed homotopy algorithm \ref{alg:homotopy} and $\lambda = (0.4, 0.6)$, add feasible Boolean}
\begin{tabular}{c c c c c}
\toprule[1.5pt] \\
$x_0$ & Feasiblility of $x_0$ & $x^*$ & $f(x^*)$ & $\|H_w^*\|$ \\
\\
\midrule
$(1.0, 2.0, 0.0, 1.0, 1.0)$ & No & $ (-0.1390, -0.0518, -0.5309, -0.4189, 1.5023)$ & $(2.7363, -0.4147)$ & $2.8e^{-5}$ \\ 
\\
 $(0.5, 0.5, 0.5, 0.5, 0.5)$ & No & $(-0.1375, -0.0488,  -0.5281,  -0.4152, 1.4993)$ & $( 2.7207, -0.4042)$ & $5.4e^{-5}$ \\ 
\\
 $(-2.0, 0.0, 0.0, 0.0, 4.0)$ & Yes & $ (-0.1370, -0.0507, -0.5311, -0.4180,  1.4983)$ & $(2.7231, -0.4058)$ & $3.7e^{-5}$ \\ 
\\
$(0.4, 0.8, 0.0, 0.0, 0.0)$ & Yes & $ (-0.1389, -0.0513, -0.5306, -0.4179, 1.5020)$ & $(2.7341, -0.4132)$ & $5.0e^{-5}$ \\ 
\\
\toprule[1.5pt]
\end{tabular}
\label{tab:homotopyEX2}
\end{table}

Table\,\ref{tab:cpu_feval_comparisonEX2} compares the computational cost of the considered methods for Example\,\ref{EX2} in terms of CPU time and the total number of function evaluations. The proposed homotopy method achieves the Pareto-optimal solution with a moderate computational cost, requiring only a few thousand evaluations of the objective and constraint functions. When parallelization is employed, the CPU time is further reduced while maintaining the same number of function evaluations.

In contrast, the weighted-sum method is computationally inexpensive in terms of CPU time but requires a comparable number of objective evaluations, reflecting its reliance on repeated scalar optimization solves. The $\epsilon$-constraint and global criterion methods incur higher computational costs due to the additional constraint handling and penalty evaluations required during the optimization process. NSGA-II is significantly more expensive than the deterministic methods, both in CPU time and function evaluations, as it relies on population-based sampling and repeated fitness evaluations to approximate the Pareto front.

Overall, the results demonstrate that the proposed homotopy approach provides a favorable balance between computational efficiency and solution quality, achieving KKT-consistent Pareto-optimal solutions at a substantially lower cost than evolutionary multiobjective optimization methods.

Table\,\ref{tab:homotopyEX2} reports the numerical results for Example\,\ref{EX2} obtained using the proposed homotopy algorithm with $\lambda=(0.4,0.6)$, starting from both feasible and infeasible initial guesses. The feasibility of each initial point $x_0$ is explicitly indicated. Regardless of whether $x_0$ is feasible, the homotopy method consistently converges to a Pareto-optimal solution with objective values clustered around $f(x^*)\approx(2.7,-0.41)$. Moreover, the small values of the KKT residual $\lVert H_{x^*}\rVert$ confirm that the computed solutions satisfy the optimality conditions to high accuracy. These results demonstrate the robustness of the proposed homotopy approach with respect to the choice of initial guess and its ability to drive infeasible starting points toward KKT-consistent Pareto-optimal solutions.

\begin{example}\label{EX1}
Consider the following MOO:
\[
\begin{aligned}
    \min_{x \in \mathbb{R}^2} \quad &  \begin{bmatrix}
f_1(x) \\
f_2(x)
\end{bmatrix}  = \begin{bmatrix}
x_1^2 + 2x_2^2 \\
-3x_1 + x_2^2 - x_1x_2
\end{bmatrix},  \\
    \text{s.t.} \quad & g(x) = x_1^2 - 5x_2 + 3 \leq 0, \\
                       & h(x) = x_1 + x_2^4 = 0.
\end{aligned}
\]
\end{example}

\begin{figure}[H]
    \hspace*{-0.4cm}
    \centering
    \includegraphics[scale=0.35]{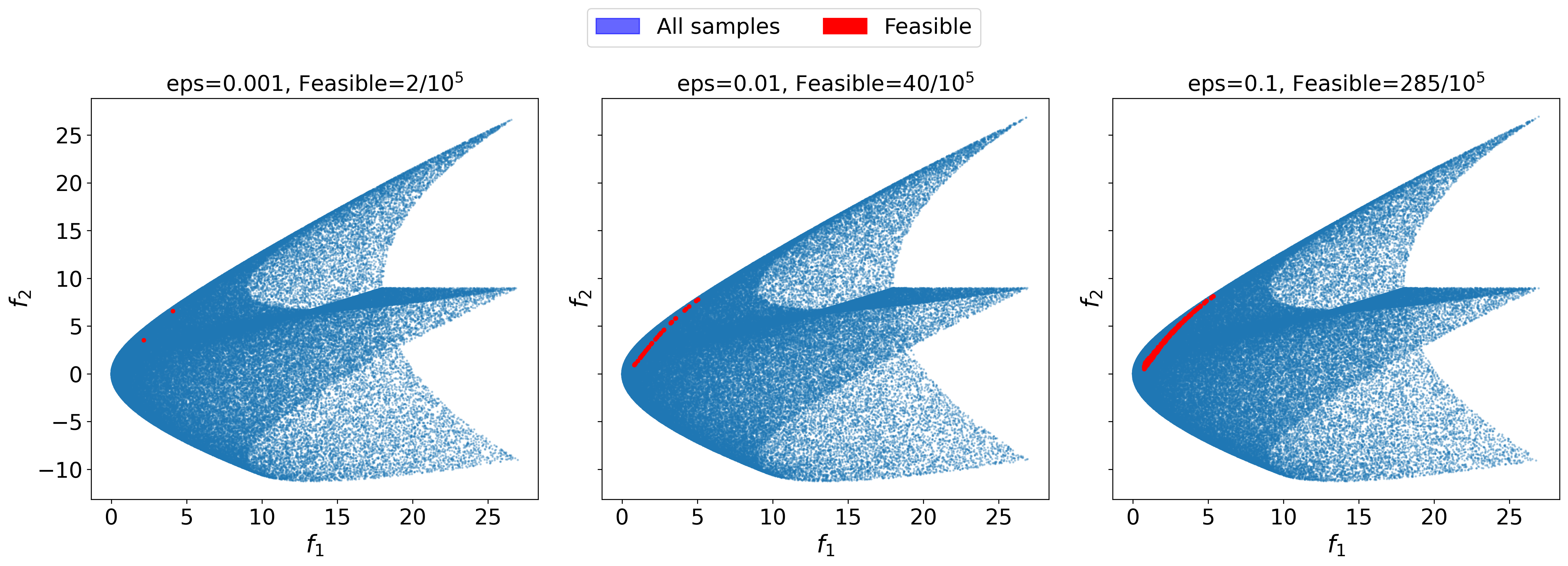}
    \caption{Example~\ref{EX1} Feasibility regions obtained via uniformly sampling $100,000$ points from the hypercube $[-3, 3]^5$. Each panel corresponds to a distinct tolerance parameter, $\epsilon = 0.001, 0.01, 0.1$. Blue points denote all samples, while red points highlight the subset that satisfies the feasibility conditions $|h_1(x)| \leq \epsilon$, $|h_2(x)| \leq \epsilon$ and $g(x) \leq 0$. 
    \label{Fig:feasibleEX1}}
    \label{Fig1}
\end{figure}

Fig. \ref{Fig:feasibleEX1} illustrates the feasible region for Example \ref{EX1} obtained by projecting 10,000 uniformly sampled points from the hypercube $[-3,3]^5$ onto the constraint set. Each panel corresponds to a different equality tolerance, $\epsilon=0.001,,0.01,$ and $0.1$. Blue points denote all sampled candidates, while red points indicate those satisfying $|h_1(x)|\le\epsilon$, $|h_2(x)|\le\epsilon$, and $g(x)\le0$. When $\epsilon$ is small, the equality constraint $h(x)=x_1+x_2^4=0$ defines a thin manifold whose intersection with the inequality constraints is rarely captured by random sampling, yielding very few feasible points. As $\epsilon$ increases, the feasible band widens and the number of admissible samples grows rapidly, revealing a highly concentrated feasible region in objective space and indicating a strongly restricted Pareto structure.

Fig.\,\ref{fig: homotopyFig1} shows in blue dots a total of $100,000$ randomly distributed sampled points in the hypercube $(x_1, x_2)\in [-10,10]^2$, feasible projections in the $(f_1, f_2)$ space in gold dots, and red dot a unique optimal solution found by homotopy, illustrating the effectiveness of the proposed homotopy framework. Both visualizations confirm convergence to the unique Pareto-optimal solution $(f_{1},f_{2})=(0.75, 0.84)$, demonstrating the consistency and correctness of the proposed method.

Fig.\,\ref{fig:5methodsEX1} shows the feasible region and solution comparison for Example~\ref{EX1}. A total of $16{,}000{,}000$ sample points were uniformly distributed on a $4{,}000\times4{,}000$ grid over $(x_1, x_2) \in [-4,4]\times[-2,2]$ and evaluated under the constraint set $g(x) = x_1^2 - 5x_2 + 3 \leq 0$ and $h(x)=x_1 + x_2^4=0$. Among these, only $4{,}669$ samples satisfied all constraints and are displayed as feasible solutions (blue) in the objective space defined by $f_{1}(x)=x_1^2 + 2x_2^2$ and $f_{2}(x)=-3x_1 + x_2^2 - x_1x_2$. Since this problem admits a single non-dominated solution, all scalarization and evolutionary methods---Homotopy, Weighted Sum, Global Criterion, NSGA-II, and $\epsilon$-Constraint---converge to the same Pareto-optimal point $(f_{1},f_{2})=(0.75,0.85)$ (red marker). The absence of a trade-off curve underscores the uniqueness of this optimal feasible point and confirms the consistency of all methods in identifying it.

\begin{figure}[H]
    \hspace*{-1cm}
    \centering
    \includegraphics[scale=0.43]{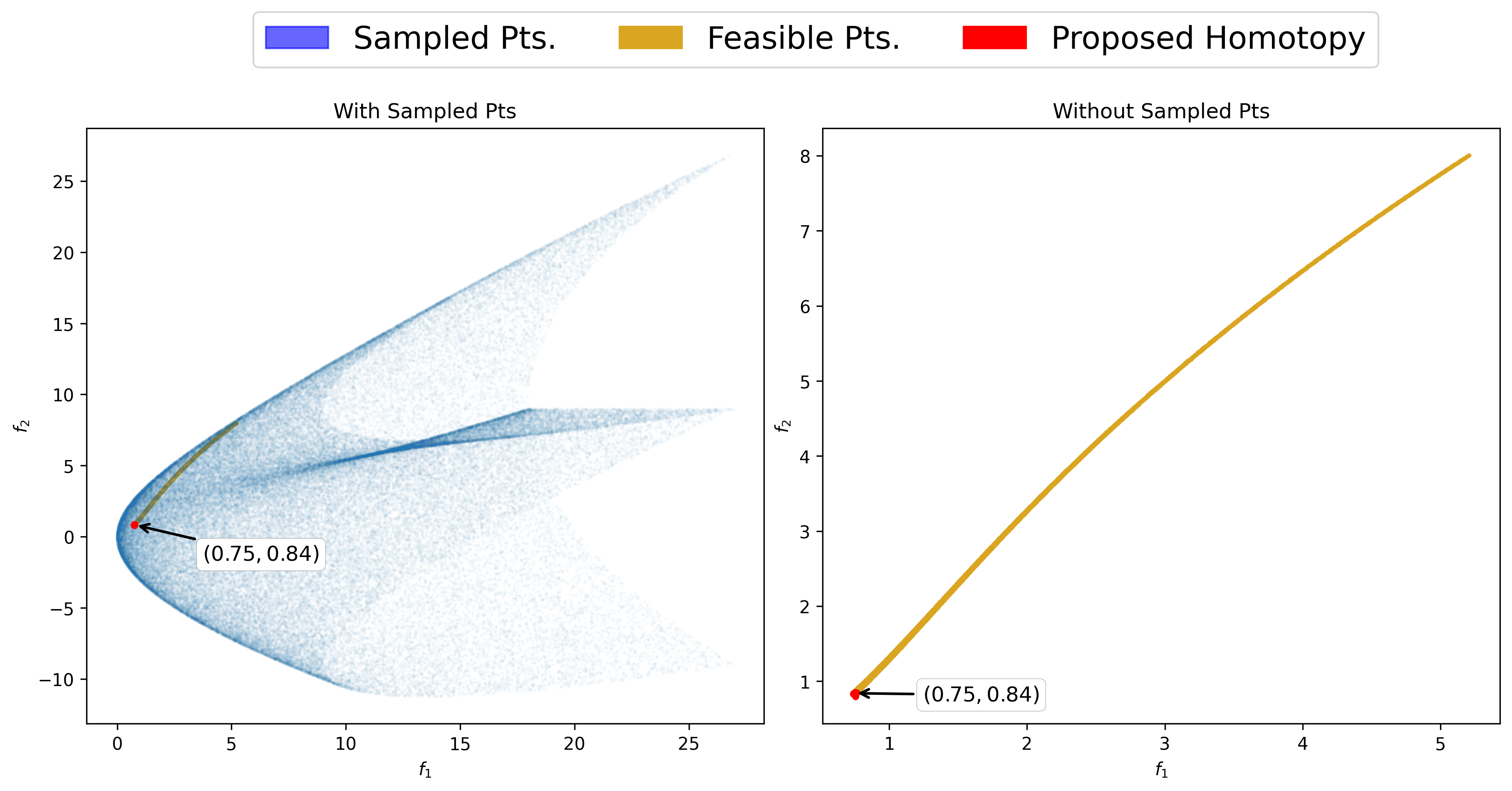}
    \caption{Example~\ref{EX1}: a total of $100{,}000$ uniformly distributed samples points (blue) drawn from the hypercube $(x_{1},x_{2})\in[-10,10]^2$; the projected feasible points (goldenrod), and the optimal solution obtained by the proposed Homotopy method (red marker), whereas 
    the \textbf{right} panel shows only the projected feasible points and the optimal solution for comparison.}
    \label{fig: homotopyFig1}
\end{figure}

\begin{figure}[H]
    \hspace*{-0.3cm}
    \centering
    \includegraphics[scale=0.3]{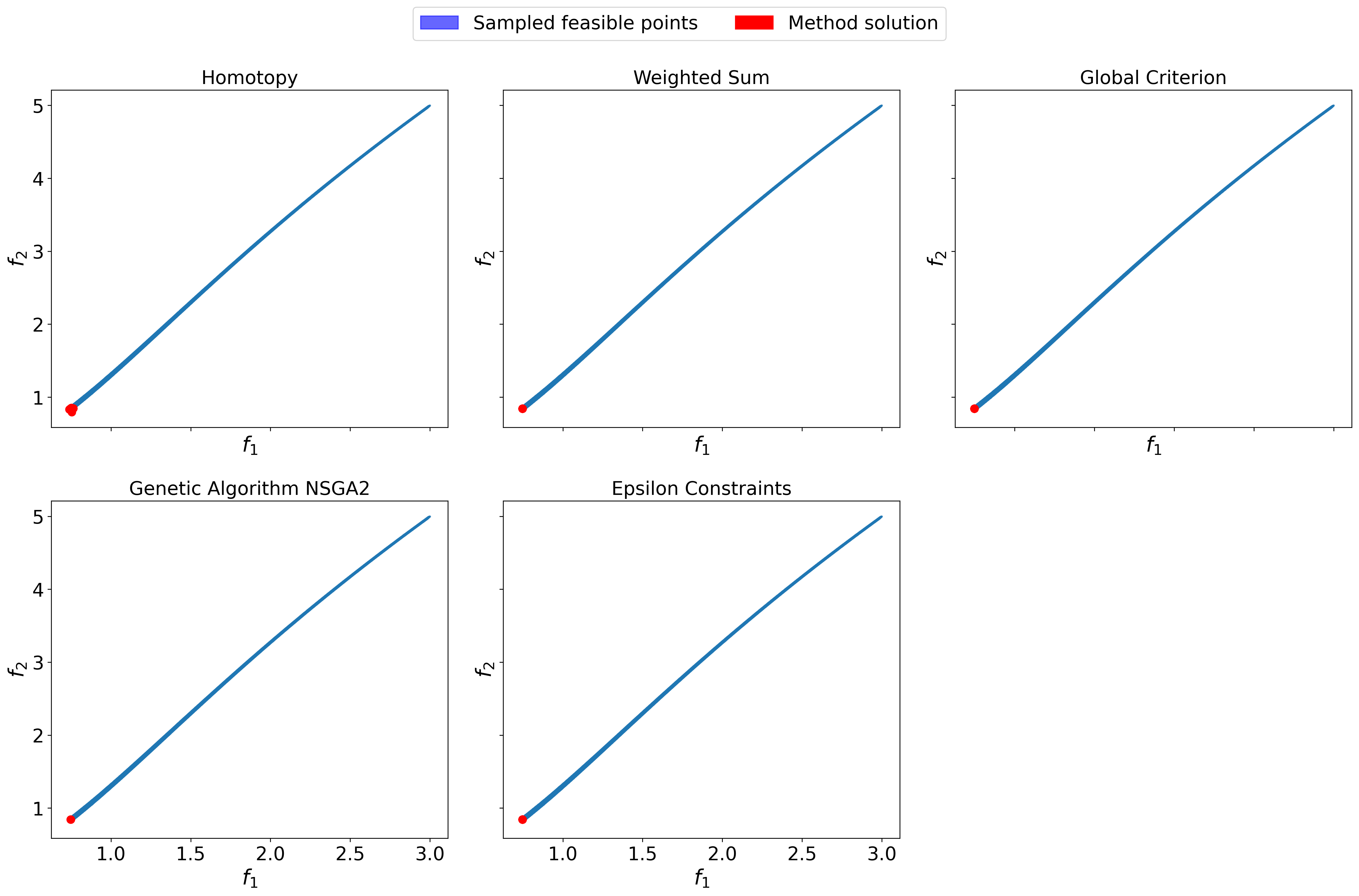}
    \caption{Feasible region and optimal solution comparison for Example~\ref{EX1}. Uniform sampling over the design space yields a small set of feasible points (blue) in objective space. Since the problem admits a single non-dominated solution, all methods—Homotopy, Weighted Sum, Global Criterion, NSGA-II, and $\epsilon$-Constraint - identify the same Pareto-optimal point $(f_1,f_2)=(0.75,0.85)$ (red).}
    \label{fig:5methodsEX1}
\end{figure}

Table\,\ref{tab:cpu_feval_comparisonEX1} compares the computational cost of different methods in terms of CPU time and function evaluations when generating full Pareto fronts using 50 weight vectors. The proposed homotopy method achieves competitive runtime while requiring substantially fewer objective and constraint evaluations than the global criterion and NSGA-II methods. The weighted-sum and $\epsilon$-constraint methods exhibit moderate computational cost, with the $\epsilon$-constraint method requiring additional evaluations due to repeated constraint enforcement. The global criterion method incurs significantly higher function-evaluation counts, reflecting the increased cost of norm-based scalarization. As expected, NSGA-II requires the largest number of evaluations owing to its population-based and stochastic search process. Overall, the results indicate that the proposed homotopy approach provides an efficient and scalable deterministic alternative for computing Pareto-stationary solutions.

\begin{table}[h]
\centering
\caption{Comparison of CPU time and function evaluations across methods for full Pareto fronts using 50 weight vectors.}
\begin{tabular}{l c c c c }
\toprule[1.5pt]
\\[-1.5ex]
\textbf{Method} & CPU Time (s) & $f_1$ & $f_2$ & $(g, h_1, h_2)$ \\
\\[-1.5ex]
\midrule
Proposed Homotopy  & 0.63 & $2{,}094^{*}$ & $2{,}094^{*}$ & $2{,}094^{*}$ \\
\\[-1.5ex]
Weighted Sum Method  & 0.44 & $2{,}337$ & $2{,}337$ & $3{,}137$ \\
\\[-1.5ex]
$\epsilon$-Constraint Method & 0.71 & $3{,}833$ & $5{,}202$ & $5{,}202$ \\
\\[-1.5ex]
Global Criterion Method & 2.86 & $29{,}902$ & $29{,}909$ & $33{,}067$ \\
\\[-1.5ex]
Genetic Algorithm NSGA2 & 7.13 & $60{,}000$ & $60{,}000$ & $60{,}000$ \\
\\[-1.5ex]
\toprule[1.5pt]
\end{tabular}
\label{tab:cpu_feval_comparisonEX1}
\end{table}


\section{Conclusion and Future Work} \label{sec:conclusion}
This work developed and analyzed a homotopy-based framework for solving constrained multiobjective optimization problems. With a carefully constructed homotopy map, we established a continuation pathway that smoothly deforms an easily solvable auxiliary system into the Karush--Kuhn--Tucker (KKT) conditions of the original Multiobjective Optimization (MOO). This construction preserves the intrinsic multiobjective structure while proven to be globally convergent of homotopy continuation. The resulting method provides a deterministic, theoretically grounded alternative to classical scalarization techniques and evolutionary algorithms. Within this broader context, the main contributions of the paper can be summarized as follows:
\begin{enumerate}[leftmargin=1cm,label=(\arabic*)]
    \item Comparative performance analysis: To the best of our knowledge, this is the first study to systematically compare a homotopy-based approach for MOOs with established scalarization techniques and NSGA-II on a unified set of benchmark problems. The comparisons span convergence behavior, numerical robustness, computational effort, and the quality and coverage of the resulting Pareto fronts.

    \item Computational efficiency assessment: 
    We provide the first standardized documentation of CPU time and function-evaluation counts required by the proposed homotopy framework. Empirically, the method is consistently faster—or at least competitive—with traditional scalarization and evolutionary methods. Moreover, it requires fewer function evaluations, since the algorithm operates directly on the homotopy map $H$ and its Jacobian $D H$, making the reported evaluation counts an accurate measure of true computational cost.

    \item Comprehensive visual and numerical validation: 
    The paper offers one of the first detailed visualizations of Pareto front geometry, homotopy path trajectories, and feasible regions for this class of homotopy formulations. These visual and numerical diagnostics provide qualitative and quantitative insights into the nature of the continuation path and facilitate direct comparison with competing multiobjective optimization techniques.

    \item Standardized and open-source implementation:
    A unified implementation of all algorithms and test problems has been made available as open-source code. This standardization enhances reproducibility and enables future benchmarking and methodological developments within the multiobjective optimization community.
\end{enumerate}

The theoretical and numerical results demonstrate that the proposed homotopy method offers a robust, convergent pathway for locating Pareto-optimal solutions, even in constrained and nonconvex settings. At the same time, practical implementation reveals several challenges inherent in numerical continuation, including step-size control, stability near singular points, and sensitivity to initial parameter choices. To address these issues, we incorporated adaptive step-size strategies, regularization techniques, and careful parameter tuning, all of which substantially improve path-following accuracy and computational performance.

Several avenues for further research remain: 1) Extending the homotopy framework to large-scale or high-dimensional MOOs is of particular interest, especially in applications where Jacobian evaluations dominate computational cost. Enhancing the numerical continuation process—through arc-length parameterization, higher-order predictor--corrector schemes, or parallel path tracking—may yield additional gains in efficiency and robustness; 2) Another promising direction is the integration of machine learning approaches for adaptive parameter selection and automatic step-size regulation; 3) Finally, strengthening the theoretical foundations of homotopy mappings, including refined convergence-rate analysis and weaker regularity assumptions, would broaden the applicability of the method to an even wider class of real-world engineering and scientific problems.
Overall, this study demonstrates that homotopy continuation provides a powerful and flexible framework for multiobjective optimization, offering both strong theoretical guarantees and practical effectiveness.

\appendix  
\begin{appendices}
\section{Fundamentals}
\begin{definition} \label{def: regular value}\cite{LeeSmoothManifolds2013}
Let $M, N$ be differential manifolds with $\text{dim} N=p$ and let
$H : M \rightarrow N$ be a differentiable mapping. If \\
$$rank [\frac{\partial H(x)}{\partial x}]=p,\qquad\qquad\forall x\in H^{-1}(y),$$
we say that $y\in N$ is a regular value of $H$ and $x\in M$ is a
regular point. Given a curve $\Gamma\subset H^{-1}(y),$ if every
$x\in \Gamma$ is a regular point, then we say that $\Gamma$ is a
regular path.
\end{definition}
\begin{definition}[Diffeomorphism] \label{def_diffeomo}\cite{LeeSmoothManifolds2013}
Given two manifolds $M$ and $N$, a differentiable map $ {\displaystyle f\colon M\rightarrow N}$ is called a diffeomorphism if it is a bijection and its inverse ${\displaystyle f^{-1}\colon N\rightarrow M}$ is differentiable as well. If these functions are  $r$ times continuously differentiable, $f$ is called a $C^{r}-$ diffeomorphism.
\end{definition}
In other words, a diffeomorphism is an isomorphism of smooth manifolds. It is an invertible function that maps one differentiable manifold to another such that both the function and its inverse are continuously differentiable.
\begin{lemma}[The Parametric Form
of the Sard Theorem on a Manifold with Boundary]\label{Sard}\cite{sard1942measure,LeeSmoothManifolds2013} Let
$ W, \text{and}~ N$ be differential manifolds of dimension $q$
and $p$, respectively, and let $M$ be a $m-$dimensional differential
manifold with boundary. Suppose   $F :  W\times M\rightarrow N$
is a $C^r$ mapping, where $r> max \{0, m-p\}.$ If $ 0\in N$ is a
regular value of $F$ and $\partial F,$ then for almost all
$\text{w}\in  W, 0$ is a regular value of
$F_{\text{w}}=F(\text{w}, \cdot)$ and $\partial{F_{\text{w}}}$, where
$\partial F, ~\partial{F_{\text{w}}}$ denote the restriction of $F$
and $F_{\text{w}}$ to $ W\times\partial M,$ and $
\partial M,$ respectively.
\end{lemma}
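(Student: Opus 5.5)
The plan is the classical argument from parametric transversality: reduce the statement to the ordinary Sard theorem applied to a projection. Set $Z := F^{-1}(0) \subset W \times M$. Let $\pi : Z \to W$, $(\text{w},x)\mapsto \text{w}$, be the restriction of the projection onto the first factor. The proof will show that $0$ is a regular value of $F_{\text{w}}$ (respectively $\partial F_{\text{w}}$) exactly when $\text{w}$ is a regular value of $\pi$ (respectively $\pi|_{\partial Z}$). Sard's theorem then says the exceptional $\text{w}$ form a null set.

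\textbf{Step 1 ($Z$ is a manifold with boundary).} Since $0$ is a regular value of $F$ on the interior of $W\times M$, the preimage theorem makes $Z \cap (W \times \operatorname{int} M)$ a $C^r$ submanifold of dimension $q+m-p$. Since $0$ is also a regular value of $\partial F = F|_{W\times\partial M}$, near each boundary point the zero set meets $W\times \partial M$ transversally. The standard lemma for preimages under maps from manifolds with boundary (as in Guillemin--Pollack) then gives that $Z$ is a $C^r$ manifold with boundary, with $\partial Z = Z \cap (W\times \partial M)$ of dimension $q+m-p-1$. This is the step I expect to need the most care. I would work in a boundary chart $M \cong$ half-space, extend $F$ locally across the boundary, and use surjectivity of $D(\partial F)$ to check that the boundary coordinate restricted to the extended zero manifold has $0$ as a regular value.

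\textbf{Step 2 (linear algebra).} At $(\text{w},x)\in Z$ we have $T_{(\text{w},x)}Z = \ker DF(\text{w},x)$, where $DF(\text{w},x)(\omega,\xi) = D_{\text{w}}F\,\omega + D_xF\,\xi$, and $D\pi(\omega,\xi)=\omega$. I claim $D\pi$ is onto $T_{\text{w}}W$ iff $D_xF$ is onto. If $D_xF$ is onto, then given $\omega$, choose $\xi$ with $D_xF\,\xi = -D_{\text{w}}F\,\omega$; this gives $(\omega,\xi)\in\ker DF$. Conversely, given a target $y$, pick $(\omega,\xi)$ with $DF(\omega,\xi)=y$ (possible since $DF$ is surjective). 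Surjectivity of $D\pi$ gives $(\omega,\xi')\in \ker DF$, and then $D_xF(\xi-\xi') = y$. The same computation with $\partial M$ in place of $M$ and $\partial Z$ in place of $Z$ shows that $\text{w}$ is a regular value of $\pi|_{\partial Z}$ iff $0$ is a regular value of $\partial F_{\text{w}}$.

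\textbf{Step 3 (Sard).} The map $\pi$ is $C^r$ from a manifold of dimension $q+m-p$ to one of dimension $q$. Sard's theorem requires $r \ge \max\{1,(q+m-p)-q+1\}$, which is exactly the hypothesis $r>\max\{0,m-p\}$. Hence the critical values of $\pi$ form a null set. The analogous (weaker) condition holds for $\pi|_{\partial Z}$, so its critical values are null as well. Outside the union of these two null sets, Step 2 shows that $0$ is a regular value of both $F_{\text{w}}$ and $\partial F_{\text{w}}$.
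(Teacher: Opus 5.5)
Your proof is correct and is essentially the argument the paper outsources: the paper only invokes the parametric transversality theorem (Lee, Thm.~6.35) with the target submanifold taken to be the point $0$, and that theorem is proved exactly by your projection $\pi : F^{-1}(0) \to W$ followed by Sard's theorem. Your write-up also supplies what the citation leaves unstated, namely the differentiability count $r \ge \max\{1, m-p+1\}$ needed for the $C^r$ Sard theorem and the separate treatment of the boundary via $\pi|_{\partial Z}$, neither of which a citation of the $C^\infty$ theorem spells out.
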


Lemma\,\ref{Sard} follows from the parametric transversality theorem, since Sard’s theorem is obtained as the special case in which transversality to a point implies that the set of parameters for which the slice map fails to have that point as a regular value has measure zero; see \cite[Theorem~6.35]{LeeSmoothManifolds2013}.
\begin{lemma}[The Inverse Image Theorem]\label{inverse}\cite{LeeSmoothManifolds2013} Suppose $M$ is an
$m-$dimensional $C^r$ differential manifold with boundary, $N$ is a
$p-$dimensional $C^r$ differential manifold, $r\geqslant 1,$ and $ F
: M\rightarrow N$ is a $C^r$ map. If $q\in N$ is a regular value of
$F$ and $\partial F,$ then either $S=F^{-1}(q)$ is empty or a
$(m-p)-$dimensional submanifold, and $$\partial S=S\cap
\partial M.$$
\end{lemma}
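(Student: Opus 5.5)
The plan is to reduce the statement to the ordinary regular value theorem (via the implicit function theorem) by working in local charts. Interior points of $M$ and boundary points of $M$ are handled separately. Fix $x\in S=F^{-1}(q)$; if no such $x$ exists, $S$ is empty and there is nothing to prove. We must produce a chart of $S$ around $x$ that is modeled on $\mathbb{R}^{m-p}$ if $x\in\operatorname{int}M$, and on the half-space $\mathbb{H}^{m-p}=\{y\in\mathbb{R}^{m-p}: y_{m-p}\ge 0\}$ if $x\in\partial M$. In the second case, the boundary coordinate of that chart must vanish exactly on $S\cap\partial M$.

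\emph{Interior points.} Choose a chart $\varphi$ of $M$ at $x$ with values in an open subset of $\mathbb{R}^m$, and a chart $\psi$ of $N$ at $q$ with $\psi(q)=0$. Set $G=\psi\circ F\circ\varphi^{-1}$. Since $q$ is a regular value, $DG(\varphi(x))$ has rank $p$. After permuting coordinates, its last $p$ columns form an invertible block. By the implicit function theorem (with $C^r$ regularity, $r\ge1$), the map
\[
\Phi(y)=(y_1,\dots,y_{m-p},G(y))
\]
is a local $C^r$ diffeomorphism near $\varphi(x)$. In the chart $\Phi\circ\varphi$, the set $S$ becomes $\{y: y_{m-p+1}=\dots=y_m=0\}$. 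This is a slice chart, so $S$ is locally an $(m-p)$-dimensional embedded submanifold near $x$, and $x\notin\partial S$.

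\emph{Boundary points.} This is the main step. Take a boundary chart $\varphi$ at $x$ into $\mathbb{H}^m=\{y: y_m\ge0\}$ with $\varphi(x)=0$, and form $G$ as before. Because $G$ is $C^r$ on a relatively open subset of $\mathbb{H}^m$, it extends by definition to a $C^r$ map $\tilde G$ on an open neighborhood $U$ of $0$ in $\mathbb{R}^m$. The derivative $D\tilde G(0)$ has rank $p$, so by the interior argument $\tilde S=\tilde G^{-1}(0)\cap U$ is, after shrinking $U$, an $(m-p)$-dimensional submanifold without boundary, with $T_0\tilde S=\ker D\tilde G(0)$.

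Next we use that $q$ is also a regular value of $\partial F$. This says that $D\tilde G(0)$ restricted to $\mathbb{R}^{m-1}\times\{0\}$ is surjective. Hence
\[
\dim\bigl(\ker D\tilde G(0)\cap(\mathbb{R}^{m-1}\times\{0\})\bigr)=m-1-p<m-p=\dim\ker D\tilde G(0),
\]
so $\ker D\tilde G(0)$ is not contained in $\{y_m=0\}$. Therefore the coordinate function $\pi(y)=y_m$, restricted to $\tilde S$, has nonzero differential at $0$.

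We can then choose local coordinates $(z_1,\dots,z_{m-p})$ on $\tilde S$ near $0$ with $z_{m-p}=\pi|_{\tilde S}$; this is again the implicit function theorem, applied on $\tilde S$. In these coordinates, $\varphi(S)=\tilde S\cap\{y_m\ge0\}$ corresponds to $\{z_{m-p}\ge0\}$. This is a half-space chart of dimension $m-p$, and its boundary $\{z_{m-p}=0\}$ is exactly $\tilde S\cap\{y_m=0\}$, which corresponds to $S\cap\partial M$.

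\emph{Conclusion.} The charts from the two cases give $S$ the structure of an $(m-p)$-dimensional $C^r$ submanifold with boundary. Transition maps between them are $C^r$ because they are restrictions of the $C^r$ transition maps of $M$. Interior points of $M$ lying in $S$ are interior points of $S$, and points of $S\cap\partial M$ are boundary points of $S$, which gives $\partial S=S\cap\partial M$.

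The delicate point is the transversality computation in the boundary case: it is exactly where the hypothesis on $\partial F$ is needed, and without it $S$ could be tangent to $\partial M$ and fail to be a manifold with boundary. I would carry out that dimension count first and check it carefully.
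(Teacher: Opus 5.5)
Your proof is correct. It is the standard argument, essentially the one in Guillemin--Pollack: extend the local representative $G$ across the boundary hyperplane, take the regular level set $\tilde S$ of the extension, and use the hypothesis on $\partial F$ to show that $y_m|_{\tilde S}$ has nonzero differential, so that $\tilde S\cap\{y_m\ge 0\}$ is a half-space chart.

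The paper gives no argument of its own. It cites Lee and adds a remark asserting that $\partial S=S\cap\partial M$ follows from surjectivity of $DF_x$, i.e.\ from $q$ being a regular value of $F$ alone. That attribution is wrong, and your proof shows why. Take $F(y_1,y_2)=y_2-y_1^2$ on $M=\{y_2\ge 0\}$ with $q=0$: $DF$ has rank one everywhere, yet $S$ is a parabola tangent to $\partial M$, so $\partial S=\emptyset\neq\{(0,0)\}=S\cap\partial M$. What fails there is exactly your dimension count $\dim\bigl(\ker D\tilde G(0)\cap\{y_m=0\}\bigr)=m-1-p$, because $0$ is a critical value of $\partial F$. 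So your route gives a self-contained proof in which each hypothesis visibly does its job, while the paper's route gains only brevity, at the cost of a misplaced justification.

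The one thing to tighten is your sentence on transition maps: your $z$-coordinates on $\tilde S$ are not literally restrictions of charts of $M$. You can fix this in either of two ways. One is to note that each chart of $S$ you build is the restriction of a $C^r$ map on an open subset of $M$ and has a $C^r$ inverse into $M$. The other is to use the $M$-chart $\Theta\circ\varphi$ with $\Theta(y)=(y_{i_1},\dots,y_{i_{m-p-1}},y_m,\tilde G(y))$, where the $y_{i_k}$ are chosen so that together with $y_m$ they restrict to a basis of $(\ker D\tilde G(0))^*$. In that chart $S$ is the half-slice $\{\tilde G=0,\ y_m\ge 0\}$, and compatibility is immediate.
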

The identity $\partial F^{-1}(q)=F^{-1}(q)\cap\partial M$ follows from the regularity of $q$ as a value of $F$. In particular, surjectivity of $DF_x$ implies that $F$ is transverse to $\{q\}$ along $\partial M$, so the preimage intersects the boundary cleanly and its boundary is precisely the intersection with $\partial M$; see \cite[Theorem~5.12]{LeeSmoothManifolds2013}.
\begin{lemma}[Classification of One-Dimensional Manifolds with Boundary]
\label{fenglei}
Each connected component of a one-dimensional manifold with boundary is homeomorphic to either the unit circle $S^1$ or a unit interval.
\end{lemma}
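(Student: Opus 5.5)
The classification is purely topological, so I would give the classical chart-gluing argument (as in the appendix of Milnor's \emph{Topology from the Differentiable Viewpoint}), working with a connected, Hausdorff, second countable one-dimensional manifold $X$, possibly with boundary. Throughout, ``unit interval'' is read as any of $[0,1]$, $[0,1)$ or $(0,1)$ up to homeomorphism. Since the application to Lemma~\ref{inverse} concerns smooth curves $H_{\omega^0}^{-1}(0)$, I may equip $X$ with a smooth structure. I would then define a \emph{parametrization} to be a homeomorphism $\varphi:I\to V$ from an interval $I\subset\mathbb{R}$ onto an open subset $V\subset X$. Every point of $X$ has such a parametrization; a boundary point gets a half-open interval. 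In the smooth setting I can also take $\varphi$ to be unit-speed with respect to a Riemannian metric on $X$.

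The key step is a gluing lemma. Let $\varphi:I\to V$ and $\psi:J\to V'$ be parametrizations with $V\cap V'\neq\emptyset$, and suppose neither of $V,V'$ contains the other. Then either $V\cup V'$ is the image of a single parametrization, or $V\cup V'=X$ and $X$ is homeomorphic to $S^1$. To prove it, I would study the transition map $\psi^{-1}\circ\varphi$ on the open set $\varphi^{-1}(V\cap V')\subset I$. Each connected component of this set is a subinterval, and on it the transition map is a strictly monotone homeomorphism; with unit speed parametrizations it is simply $s\mapsto \pm s+c$.

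The main obstacle is showing that each such component must contain an end of $I$ and be sent to an end of $J$. The argument is by contradiction. If a component ended at an interior point $a$ of $I$, then $\varphi(a)$ would be the limit of points $\psi(\tau_k)$. Two cases arise. If $\tau_k$ converges in $J$, then $\varphi(a)\in V'$, which contradicts maximality of the component. If $\tau_k$ escapes to an end of $J$, then Hausdorffness and the non-containment hypothesis are contradicted. Consequently there are at most two components. With one component, I concatenate the two intervals along the overlap into a single interval parametrizing $V\cup V'$. With two components, the two ends glue up cyclically, so $V\cup V'$ is open and compact, hence equals $X$ by connectedness, and the gluing yields an explicit homeomorphism with $S^1$.

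Given the lemma, I would finish by exhaustion. Second countability lets me cover $X$ by countably many parametrized sets $V_1,V_2,\dots$. Using connectedness, I reorder them so that each $V_{k+1}$ meets $W_k:=V_1\cup\cdots\cup V_k$. Inductively, either some step produces a circle (and then $X\cong S^1$), or each $W_k$ is the image of an interval $I_k$ with $I_k\subset I_{k+1}$ and compatible maps. In the second case $X=\bigcup_k W_k$ is the image of the increasing union $\bigcup_k I_k$, which is again an interval. This union map is a continuous bijection that is a local homeomorphism, hence a homeomorphism. Finally, any endpoints that are included correspond exactly to $\partial X$, which gives the closed, half-open or open interval cases.
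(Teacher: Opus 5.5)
The paper gives no argument for this lemma; it only cites the classification theorem in Lee. Your self-contained version of Milnor's chart-gluing proof is therefore a genuinely different route. Its outer architecture is sound: a gluing lemma, then exhaustion by an increasing union of parameter intervals, with a circle appearing as soon as some overlap has two components.

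\textbf{The key step fails as you argue it.} You try to show that a component $C$ of $\varphi^{-1}(V\cap V')$ cannot end at an interior point $a$ of $I$. In the case where $\tau_k$ runs off to an end of $J$, you claim a contradiction with Hausdorffness and non-containment. There is none. Take $X=\mathbb{R}$, $\varphi=\mathrm{id}$ on $I=(0,3)$ and $\psi(t)=t+2$ on $J=(0,3)$. The unique component $C=(2,3)$ ends at the interior point $2$ of $I$, where $\psi^{-1}\circ\varphi(s)=s-2\to 0$, an end of $J$. This is exactly the overlap you later want to concatenate. Taken literally, your two cases would force $C=I$, i.e.\ $V\subset V'$, so the argument proves too much. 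Hausdorffness is needed only in your first case, where uniqueness of limits gives $\varphi(a)=\psi(\tau)$. The line with two origins shows it cannot be dropped: there the overlap components $(-1,0)$ and $(0,1)$ end at $0$ with $\tau_k\to 0\in J$, yet $\varphi(0)\neq\psi(0)$.

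\textbf{What the argument actually gives is weaker, and non-containment must be used twice.} Write $\theta=\psi^{-1}\circ\varphi$.
\begin{enumerate}
\item Closedness of the graph of $\theta$ in $I\times J$ (this is where Hausdorffness enters), together with maximality of $C$, gives a dichotomy at each end of $C$ that $C$ does not contain: either that end is missing from $I$, or the matching end of $\theta(C)$ is missing from $J$.
\item A component containing an endpoint of $I$ is impossible. That endpoint must go to an endpoint of $J$, which together with the dichotomy at the other end forces $C=I$ or $\theta(C)=J$.
\item $V\not\subset V'$ forbids both ends of $C$ being missing from $I$, and $V'\not\subset V$ forbids both ends of $\theta(C)$ being missing from $J$.
\end{enumerate}
Hence one end of $C$ is a missing end of $I$ matched to a point of $J$, and the other is a point of $I$ matched to a missing end of $J$. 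This is the structure your later steps actually use:
\begin{itemize}
\item it gives at most two components, since disjoint subintervals cannot abut the same end of $I$;
\item it makes the one-component concatenation an interval;
\item in the two-component case, the images must abut different ends of $J$, so both components carry the same orientation and the gluing is cyclic.
\end{itemize}

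The relevant dichotomy is ``belongs to $I$'' versus ``missing from $I$'', not ``interior point'' versus ``endpoint''. For example, with $X=[0,\infty)$ and identity parametrizations on $I=[0,1)$ and $J=(0,2)$, the single component $(0,1)$ abuts both ends of $I$.

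Finally, you do not need a smooth structure. For a merely topological $X$, invoking one would be circular, since smoothability of $1$-manifolds is normally deduced from this very classification. Your gluing works verbatim with strictly monotone transition maps and a reparametrization of $J$.
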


\begin{proof}
This follows from the classification theorem for one-dimensional manifolds; see \cite[Theorem~1.8]{LeeSmoothManifolds2013}.
\end{proof}

\begin{lemma}[Closure of $\Omega$ and existence of $\hat{\Omega}$]
\label{lem:closed}
The set $\Omega$ is closed, and there exists an open set $\hat{\Omega}$ such that
\[
\hat{\Omega} \subset \Omega^{0}.
\]
\end{lemma}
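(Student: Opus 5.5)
The closedness of $\Omega$ follows from continuity. Since $\bm{g}$ and $\bm{h}$ are $C^2$, hence continuous, we can write
\[
\Omega=\bigcap_{i=1}^m g_i^{-1}\big((-\infty,0]\big)\;\cap\;\bigcap_{j=1}^s h_j^{-1}(\{0\}).
\]
This is a finite intersection of preimages of closed sets under continuous maps, so $\Omega$ is closed. Combined with (A), $\Omega$ is then compact. Compactness is what the boundedness lemma (Lemma~\ref{youjie}) and Theorem~\ref{conver} will actually use.

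For the existence of $\hat{\Omega}$, I would interpret the statement the way it is used: we need a set $\hat{\Omega}\subset\Omega^0$ that is nonempty and open relative to the constraint manifold $\{\bm{h}=0\}$. Note that the literal claim is trivial with $\hat{\Omega}=\emptyset$, so nonemptiness is the real content. The plan has three steps. First, show $\Omega^0\neq\emptyset$; this is implicitly assumed wherever the paper picks $\bm{x}^0\in\Omega^0$, and I would state it explicitly as part of (A) or derive it from the setting. Second, fix $\bar{\bm{x}}\in\Omega^0$, so $g_i(\bar{\bm{x}})<0$ for all $i$. Third, by continuity of $\bm{g}$ pick $\delta>0$ with $g_i(\bm{y})<0$ for all $i$ and all $\bm{y}\in B(\bar{\bm{x}},\delta)$. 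Then set
\[
\hat{\Omega}:=B(\bar{\bm{x}},\delta)\cap\{\bm{y}:\bm{h}(\bm{y})=0\}.
\]
Every point of this set satisfies $\bm{g}<0$ and $\bm{h}=0$, so $\hat{\Omega}\subset\Omega^0$. It is relatively open in $\{\bm{h}=0\}$ and contains $\bar{\bm{x}}$. If $s=0$, it is a genuine open ball in $\mathbb{R}^n$.

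Two further remarks support how the set is used. By (B), LICQ holds, so $\{\bm{h}=0\}$ is near $\bar{\bm{x}}$ an $(n-s)$-dimensional $C^2$ submanifold, by the implicit function theorem. Hence $\hat{\Omega}$ has positive $(n-s)$-dimensional measure, which is what the ``almost all $\omega^0$'' statements need. Also, $\Omega^0$ itself is relatively open in $\{\bm{h}=0\}$, by the same continuity argument applied at each of its points, so one may take the largest choice $\hat{\Omega}=\Omega^0$.

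The main obstacle is not analytic but one of formulation. The lemma says ``open set,'' while with equality constraints $\Omega^0$ generally has empty interior in $\mathbb{R}^n$. I would therefore prove relative openness and say explicitly that this is the sense in which $\hat{\Omega}$ is open. I would also flag that nonemptiness of $\Omega^0$, a Slater-type condition, must be assumed, because it does not follow from (A)–(B) alone.
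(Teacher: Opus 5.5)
Your proof is correct and follows the same route as the paper's. $\Omega$ is closed because it is cut out by non-strict inequalities and equalities of continuous functions, and $\hat{\Omega}$ is taken as a neighbourhood inside $\Omega^0$. Your version is more careful than the paper's one-line argument, which claims $\Omega^0$ is ``open by definition''. That claim fails in $\mathbb{R}^n$ once $s\ge 1$: under (B) no ball can lie in $\Omega^0$, since $h\equiv 0$ on a ball would force $\nabla h_j=0$ there, contradicting LICQ. So the only ambient-open choice is $\hat{\Omega}=\emptyset$, and your reading of ``open'' as relatively open in $\{h=0\}$ is exactly the repair needed.

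One correction: your closing claim that $\Omega^0\neq\emptyset$ does not follow from (A)--(B) is wrong, so no extra Slater-type hypothesis is needed.
\begin{itemize}
\item Take $\bar{x}\in\Omega$, which exists by (A).
\item By LICQ the vectors $\nabla h_j(\bar{x})$ ($j\in J$) and $\nabla g_i(\bar{x})$ ($i\in I(\bar{x})$) are linearly independent. Hence the linear system $\nabla h_j(\bar{x})^{T}d=0$, $\nabla g_i(\bar{x})^{T}d=-1$ has a solution $d$; this is the standard fact that LICQ implies MFCQ.
\item Since the $\nabla h_j(\bar{x})$ are independent, the implicit function theorem gives a $C^1$ curve $x(\tau)$ with $h(x(\tau))=0$, $x(0)=\bar{x}$ and $x'(0)=d$.
\item For small $\tau>0$ this gives $g_i(x(\tau))=-\tau+o(\tau)<0$ for $i\in I(\bar{x})$. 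For the remaining $i$, $g_i(x(\tau))<0$ by continuity.
\end{itemize}
Hence $x(\tau)\in\Omega^0$. You can centre your ball construction there using only the paper's standing assumptions.
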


\begin{proof}
Since $\Omega$ is defined by closed conditions, it is closed. Because $\Omega^{0}$ is open by definition, it contains an open neighborhood $\hat{\Omega}$.
\end{proof}

\section{Proof of Lemma \ref{zhengze} - Existence}\label{zhengzeappen}
\begin{proof}
Observe that \begin{eqnarray*}
&& H^{-1}_{\omega^0}(0)  =  \{(\omega,t) \in \Omega \times
\mathbb{R}^{p+m}_{+} \times \mathbb{R}^s \times (0, 1] \colon \quad H(\omega^0, \omega,t) = 0\}.
\end{eqnarray*}

For any $ (\omega^0, \omega,  t)\in H^{-1}(0),$
$$\frac{\partial H( \omega ^0, \omega, t)}{\partial(x^0, \text{w}^0, u^0, x)}=
  \begin{matrix}
     \begin{bmatrix}
     -t I & 0 & 0 & Q(x) \\
      0 & 0 & 0 & \textbf{J}_ h(x)\\
      -t U^0 \textbf{J}_ g(x^0)
     & 0 & -t diag (g(x^0)) & U \textbf{J}_ g(x)\\
      0 & \frac{3}{2}t (\text{w}^0)^{\frac{1}{2}}I & 0 & 0 
     \end{bmatrix}
  \end{matrix},$$
where $Q(x)=(1-t)(\sum^p\limits_{i=1}\text{w}_i\nabla^2f_i(x)
+\sum^m\limits_{j=1}u_j\nabla^2g_j(x))+\sum\limits_{k=1}^sv_k\nabla
^2 h_k(x)+t I.$
Because $x\in \Omega, ~ x^0 \in \Omega^0 $ and $t\in (0, 1],$ by
the LICQ condition (B), we obtain that $rank[\frac{\partial
H(\omega^0, ~\omega, ~t)}{\partial(x^0, \text{w}^0,
u^0, x)}]=n+p+m+s.~ $ So that the $rank[\frac{\partial
H(\omega^0, ~\omega, ~t)}{\partial(\omega^0, ~\omega, ~ t)}]=n+p+m+s$ and $rank[\frac{\partial
H(\omega^0, ~\omega,~ , 1)}{\partial(\omega^0, ~\omega)}]=n+p+m+s.~ $

\noindent Thus both Jacobian matrices  $\frac{\partial
H(\omega^0, ~\omega, ~t)}{\partial(\omega^0, ~\omega, ~ t)}$ and
 $\frac{\partial
H(\omega^0, ~\omega,~ , 1)}{\partial(\omega^0, ~\omega)}$ are matrices
of full-row rank. This means that, $0$ is a regular value of $H$ and
$\partial H$ from Definition \ref{def: regular value}, where $\partial H$ denotes the restriction of $H$ to $N \times \partial (M \times (0,1]).$ By Lemma \ref{Sard}, for almost all $\omega^0 \in
 \Omega^0\times W^{++}\times
 \mathbb{R}^{m}_{++}\times\{0\}=N, ~0$ is a regular value of $~H_{\omega^0}$ and $~\partial H_{\omega^0}$, where $~\partial H_{\omega^0}$ denotes the restriction of $H_{\omega^0}$ to $\partial (M \times (0,1]).$
 
\noindent Since $~H(\omega^0, \omega^0, 1)=0$, then by Lemma \ref{inverse} we have that $H_{\omega^0}^{-1}(0)$ cannot be empty, that is, it is a $1$-dimensional submanifold and by Lemma\ref{fenglei} consists of
some smooth curves. Also, since $~H(\omega^0, \omega^0,
 1)=0,$ there must be a smooth curve,  denoted by $\Gamma_{\omega^0},$ that starts from $(\omega^0, 1) $.
\end{proof}

\section{Proof of Lemma \ref{fenliangyoujie} - Component Boundedness}\label{fenliangyoujieappen}

\begin{proof} Suppose that the conclusion does not hold, that is, \textit{the $\text{w}$ component of the smooth curve
$\Gamma_{\omega^0}$ is unbounded}. Since (0,1]
is bounded, there exists a sequence
$\{(\omega^k,t_k)\}\subset\Gamma_{\omega^0}$
such that,
$$  t_k\rightarrow t_*, ~
\text{and}~ \|\text{w} ^k\|\rightarrow +\infty,~~(k\rightarrow
\infty).$$

\noindent From the last row of the homotopy equation (\ref{homotopy}), we have
\begin{equation}
(1-t_k)(1-\sum_{i=1}^p \text{w}_i^k)e-t_k [(\text{w}^{\textbf{k}})^{\frac{3}{2}} -(\text{w}^0)^{\frac{3}{2}}] =0 . \label{4d}
\end{equation}
 That is
\begin{equation}\label{w}
\begin{split}
\begin{matrix}
 \begin{bmatrix}
1-t_k\\
1-t_k\\
\vdots\\
1-t_k
\end{bmatrix}
\end{matrix}-
\begin{matrix}
 \begin{bmatrix}
(1-t_k)\text{w}_1^{(k)}+(1-t_k)\sum\limits_{i \neq 1}\text{w}_i^{(k)} + t_k (\text{w}_1^{(k)})^\frac{3}{2}\\
(1-t_k)\text{w}_2^{(k)}+(1-t_k)\sum\limits_{i\neq2}\text{w}_i^{(k)} + t_k (\text{w}_2^{(k)})^\frac{3}{2}\\
\vdots\\
(1-t_k)\text{w}_p^k+(1-t_k)\sum\limits_{i\neq p}\text{w}_i^{(k)} + t_k (\text{w}_p^{(k)})^\frac{3}{2}
\end{bmatrix}
\end{matrix}+t_k\begin{matrix}
 \begin{bmatrix}
(\text{w}^0_1)^\frac{3}{2}\\
(\text{w}^0_2)^\frac{3}{2}\\
\vdots\\
(\text{w}^0_p)^\frac{3}{2}
\end{bmatrix}
\end{matrix}
=0. \end{split} \end{equation}

\noindent Let
$I=\{j\in\{1,2,\dots,p\}:\lim\limits_{k\rightarrow
\infty}\text{w}_j^k=\infty\}.$ The assumption $\|\text{w} ^k\|\rightarrow +\infty$ implies $I\neq
\emptyset.$ 
Since $t_k\rightarrow t_*\in[0,1]~\text{and}~
\text{w}^k\geq 0,$  it follows that the second part in the left-hand
side of some equation of (\ref{w}) tends to infinity as $
k\rightarrow +\infty,$ but the first and third parts are bounded.
This is a contradiction. Thus the projection of the smooth curve
$\Gamma_{\omega^0}$ on the $\text{w}$ component is bounded.
\end{proof}  

We next prove that $\Gamma_{\omega^0}$ is a bounded curve.

\section{Proof of Lemma \ref{youjie} - Boundedness}\label{youjieappen}
\begin{proof}   Assume that $\Gamma_{\omega^0}$ is an unbounded curve. Since $\Omega$ is bounded then by  Bolzano–Weierstrass theorem and Theorem
\ref{fenliangyoujie}, there exists a sequence
$\{(\omega^k,t_k)\}\subset \Gamma_{\omega^0}$ such that as $k\rightarrow\infty$, 
\begin{equation*}
     x^k\rightarrow x^* \in \Omega,\quad t_k\rightarrow t_* \in [0, 1], \quad \text{w} ^k\rightarrow \text{w}^* \in \mathbb{R}^{p}_{+},\quad
\text{and} \quad \|(u^k,v^k)\|\rightarrow +\infty.
\end{equation*}

\noindent From the homotopy equation (\ref{homotopy}), we have
\begin{eqnarray}
 (1-t_k)(\textbf{J}_ f(x^k) \text{w}^k +\textbf{J}_ g(x^k)u^k)+\textbf{J}_
     h(x^k)v^k+t_k (x^k-x^0) & =0, \label{4a}\\
     h(x^k) & =0, \label{4b}\\
     U^k g(x^k)-t_k U^0 g(x^0) &=0. \label{4c}
\end{eqnarray}
\noindent Notice that $x^k\rightarrow x^* \in  \Omega$ means that $x^k\rightarrow x^* \in \Omega^0$ or $x^k\rightarrow x^* \in \partial \Omega$. First, suppose that $x^k\rightarrow x^* \in \partial \Omega$. Let 
\begin{equation}
    J^*=\{j\in J : \lim\limits_{k\rightarrow \infty} v_j^k=\infty\}, 
    I^*=\{i\in I : \lim\limits_{k\rightarrow\infty} u_i^k=\infty\}
\end{equation}

\noindent Recall that $I(x) = \{i\in I :g_i(x) =0$. From (\ref{4c}) since  $\lim\limits_{k\rightarrow \infty} u_i^k g_i(x^k) < \infty$, if $\lim\limits_{k\rightarrow \infty} u_i^k = \infty$ then  $\lim\limits_{k\rightarrow \infty} g_i(x^k) =  g_i(x^*) = 0$. This means that $ I^*\subset I(x^*).$ This fact will be used consequently below.
We are going to establish a contraction that these two sets are empty, that is $J^* = \emptyset$ and $I^* = \emptyset$.

\noindent Suppose $J^*\ne \emptyset.$ Rewrite (\ref{4a}) as
\begin{eqnarray}
(1-t_k)\left[\textbf{J}_ f(x^k)\text{w}^k+\sum_{i\not\in I^*}u_i^k\textbf{J}_ {g_i}(x^k)\right]  + t_k \left(x^k-x^0\right) + \left[\textbf{J}_ h(x^k)v^k+
(1-t_k)\sum_{i\in I^*}u_i^k\textbf{J}_ {g_i}(x^k) \right] =0.
\end{eqnarray}

\noindent Since
$J^*\ne \emptyset$ and the LICQ condition (B) holds, as $k\rightarrow \infty,$
\begin{eqnarray}
\underbrace{(1-t_k)\left[\textbf{J}_ f(x^k)\text{w}^k+\sum_{i\not\in I^*}u_i^k\textbf{J}_ {g_i}(x^k)\right]}_\text{bounded} + \underbrace{t_k \left(x^k-x^0\right)}_\text{bounded} + \underbrace{\left[\textbf{J}_ h(x^k)v^k+
(1-t_k)\sum_{i\in I^*}u_i^k\textbf{J}_ {g_i}(x^k) \right]}_\text{unbounded} =0.
\end{eqnarray} 
This is a contradiction. Thus $J^* = \emptyset$ and we can assume that $
v^k\rightarrow v^*$ as $k\rightarrow\infty$. 

\noindent Now suppose that $
I^* \ne \emptyset.$ We establish a contradiction this time in two cases.
\begin{mycases}
    When $t_*=1.$ Rewrite $(\ref{4a})$ as 
    \begin{align} \label{eq: right_zero}
     \textbf{J}_ h(x^k)v^k & + x^k-x^0 + \sum_{i\in I(x^*)}(1-t_k)u_i^k\textbf{J}_ {g_i}(x^k) =(1-t_k)\left[x^k-x^0 -\textbf{J}_ f(x^k)\text{w}^k - \sum_{i\not\in I(x^*)}u_i^k\textbf{J}_ {g_i}(x^k)\right].
    \end{align}
    
    \noindent Observe that for all $i \notin I(x^*)$, since $ I^*\subset I(x^*)$ then $i \notin I^*$ and $\lim\limits_{k\rightarrow
    \infty} u_i^k<\infty$. So that as $k\rightarrow \infty,$ since LICQ condition (B) holds,
    \begin{align*}
        \lim\limits_{k\rightarrow \infty} ~ (1-t_k) &\left[x^k-x^0 -\textbf{J}_ f(x^k)\text{w}^k - \sum_{i\not\in I(x^*)}u_i^k\textbf{J}_ {g_i}(x^k)\right] \\
        &= (1-t_*)\left[x^*-x^0 -\textbf{J}_ f(x^*)\text{w}^* - \sum_{i\not\in I(x^*)} \left(\lim\limits_{k\rightarrow \infty} u_i^k\right) \textbf{J}_ {g_i}(x^*)\right] = 0 
    \end{align*}
    \noindent The above equation $(\ref{eq: right_zero})$ becomes\\
    \begin{equation}\label{cone}
    x^* +\textbf{J}_ h(x^*)v^* +\sum_{i\in I(x^*)}\lim_{k\rightarrow
    \infty}[(1-t_k)u_i^k]\textbf{J}_ {g_i}(x^*)=x^0\in \hat{\Omega}.
    \end{equation}
    
    \noindent This contradicts the statement of the weak normal cone condition (C). Hence $I^* = \emptyset$.
\end{mycases}

\begin{mycases}
    When $t_*\in [0,1).$ Rewrite $(\ref{4a})$ as
\begin{align}
    t_k(x^k-x^0) + \textbf{J}_ h(x^k)v^k + (1-t_k) \left [\textbf{J}_ f(x^k)\text{w}^k +\sum_{i\not\in I^*}u_i^k\textbf{J}_ {g_i}(x^k) \right] +(1-t_k)\sum_{i\in I^*}u_i^k\textbf{J}_ {g_i}(x^k)=0.
\end{align}
\noindent Using the information we have proved so far that $\text{w}^k,~ v^k, ~u_i^k ~ (i\not\in I^*)$ are bounded, $t_*\in [0,1),$ and by the LICQ condition (B),
\begin{align}
    \underbrace{\left[t_k(x^k-x^0) + \textbf{J}_ h(x^k)v^k \right]}_\text{bounded}  + \underbrace{(1-t_k) \left [\textbf{J}_ f(x^k)\text{w}^k +\sum_{i\not\in I^*}u_i^k\textbf{J}_ {g_i}(x^k) \right]}_\text{bounded} + \underbrace{\left[(1-t_k)\sum_{i\in I^*}u_i^k\textbf{J}_ {g_i}(x^k) \right]}_\text{unbounded} =0.
\end{align}
\noindent as $k \to \infty$. This is a contradiction, hence $I^* = \emptyset$.
\end{mycases}

On the other hand, suppose that $x^k\rightarrow x^* \in \Omega^0 $. We will show that $\{u^k\}$ and $\{v^k\}$ have to be bounded. We know that $g(x^*)<0$ since $x^* \in \Omega^0$. Without loss of generality, let $\{x^k\}$ be a subsequence such that $g(x^k)\neq0$, then from $(\ref{4c})$, 
\begin{equation}
    \lim\limits_{k\rightarrow \infty} u^k  = \lim\limits_{k\rightarrow \infty} \dfrac{t_k u^0 g(x^0)}{g(x^k)} = \dfrac{t_* u^0 g(x^0)}{g(x^*)} < \infty
\end{equation}
That is, $u^k$ is bounded. Also, from $(\ref{4a})$, since $x^k, t_k, \text{w}^k, u^k$ all are bounded and LICQ condition (B) holds, then $v^k$ has to be bounded. This completes the proof. Therefore, $~\Gamma_{\omega^0}$ is a bounded curve. There exist a curve that has the properties that start from $(\text{w}_0, 1)$
\end{proof}

\vspace{0.3cm}

\section{Proof of Theorem \ref{conver} - Global Convergence}\label{converappen}
\begin{proof} By Theorem \ref{zhengze} and Lemma \ref{youjie}, the existence of $\Gamma_{\omega^0}$ is obtained. It remains to show
that the limit set $T\times \{0\}\subset \Omega \times
\mathbb{R}^{p+m}_{+}\times \{0\}$ of $\Gamma_{\omega^0}$ is nonempty as
$t\rightarrow 0.$  By the classification theorem of
one-dimensional smooth manifolds (Lemma \ref{fenglei}), $\Gamma_{\omega^0}$ is
diffeomorphic to a unit circle or the unit interval $(0,1]$ (See Definition \ref{def_diffeomo}). 

\begin{align*}
\frac{\partial H_{\omega^0}( \omega ^0, 1)}{\partial \omega} &=
\begin{matrix}
     \begin{bmatrix}
   I & 0 & 0 & \textbf{J}_ h(x^0) \\
     \textbf{J}_ h(x^0)^T & 0 & 0 & 0 \\
     U^0 \textbf{J}_ g(x^0) & 0 & diag(g(x^0)) & 0\\
     0 & -I & 0 & 0 
     \end{bmatrix}
  \end{matrix}\\
&= \begin{matrix}
     \begin{bmatrix}
   I_{n} & 0_{n\times p} & 0_{n\times m} & \textbf{J}_ h(x^0)_{n\times s} \\
     \textbf{J}_ h(x^0)^T_{s\times n} & 0_{s\times p} & 0_{s\times m} & 0_{s\times s} \\
     U^0 \textbf{J}_ g(x^0)_{m\times n} & 0_{m\times p} & diag(g(x^0))_{m\times m} & 0_{m\times s}\\
     0_{p\times n} & -\frac{3}{2} (\text{w}^{0})^{\frac{1}{2}} I_{p} & 0_{p\times m} & 0_{p\times s}
     \end{bmatrix}
  \end{matrix}
\end{align*}

Since $g(x^0)<0, ~ \text{w}^0 \in \mathbb{R}_{++}^{p}$, and with LICQ condition (B),  we have that  $\frac{\partial H_{\omega^0}( \omega ^0, 1)}{\partial \omega} $ is nonsingular. Therefore, the smooth curve
$\Gamma_{\omega^0}$, which starts from $(\omega^0, 1)$ is diffeomorphic to $(0, 1]$. Let
$(\bar \omega, \bar t)=
 (\bar x, \bar {\text{w}}, \bar u, \bar v, \bar t)$ be the limit point of $\Gamma_{\omega^0}$, then only three cases are possible:
 \begin{description}
   \item (a)~$(\bar\omega, \bar t) \in \Omega \times
 \mathbb{R}^{p+m}_{+}\times \mathbb{R}^s\times \{1\};$
   \item (b)~$(\bar \omega, \bar t) \in \partial (\Omega^0\times
 \mathbb{R}^{p+m}_{++})\times \mathbb{R}^s\times (0,1);$
 \item (c)~$(\bar \omega, \bar t)\in \Omega \times
 \mathbb{R}^{p+m}_{+}\times \mathbb{R}^s\times \{0\}.$
\end{description}
We prove that Case (c) is the only possible case.
If Case (a) is true, the limit point satisfies $\bar{t} = 1$. However, the equation $H(\omega, \omega^0, 1) = 0$ has only one solution: $\omega = \omega^0$. Since $\Gamma_{\omega^0}$ starts at $(\omega^0, 1)$, it cannot return to this point as $t \to 0$. Thus, this case is impossible.
If Case (b) is true, that means that $(\bar \omega, \bar t) \in \partial (\Omega^0\times \mathbb{R}^{p+m}_{++})\times \mathbb{R}^s\times (0,1).$ 
 Thus, at least one of the following holds:
\begin{description}
    \item (1) $\bar{u} \in \partial \mathbb{R}^m_{++}~$ (i.e., $\bar{u}_j = 0$ for some $j$),
    \item (2) $\bar{x} \in \partial \Omega^0~$ (i.e., $g_j(\bar{x}) = 0$ for some $j$),
    \item (3) $\bar{\text{w}} \in \partial \mathbb{R}^p_{++}~$ (i.e., $\bar{\text{w}}_j = 0$ for some $j$).
\end{description}

\noindent 
If Case (1) is true, $\bar{u} \in \partial \mathbb{R}^m_{++}$. Suppose $\bar{u}_{j_0} = 0$ for some $j_0$. Then, there exists a sequence $\{(\omega^k, t_k)\} \subset \Gamma_{\omega^0}$ such that $u^k_{j_0} \to 0$. From the homotopy equation (\ref{4c}):
        \[
        u^k_{j_0} g_{j_0}(x^k) = t_k u^0_{j_0} g_{j_0}(x^0).
        \]
        Taking the limit as $k \to \infty$, we have:
        \begin{align*}
            0 = \lim_{k\rightarrow \infty} \left[u^k_{j_0}g_{j_0}(x^k) \right] &= \lim_{k\rightarrow \infty} \left[t_ku^0_{j_0}g_{j_0}(x^0) \right] = 
               \bar t u_{j_0}^0g_{j_0}(x^0) <0.\\
            0 = \bar{u}_{j_0} g_{j_0}(\bar{x}) &= \bar{t} u^0_{j_0} g_{j_0}(x^0) < 0.
        \end{align*}
        Since $\bar{u}_{j_0} = 0, ~ g_{j_0}(x^0) < 0, ~\bar t \in (0,1)$ and $u^0 \in \mathbb{R}_{++}^{m}$, the right-hand side is negative, but the left-hand side is zero. This is a contradiction.
    
\noindent If Case (2) is true, $\bar{x} \in \partial \Omega^0.$ Suppose $g_j(\bar{x}) = 0$ for some $j$. Then, there exists a sequence $\{(\omega^k, t_k)\} \subset \Gamma_{\omega^0}$ such that $g_j(x^k) \to 0$. From the homotopy equation (\ref{4c}):
        \[
        u^k_j g_j(x^k) = t_k u^0_j g_j(x^0).
        \]
        Taking the limit as $k \to \infty$, we have:
        \begin{align*}
            0 = \lim_{k\rightarrow \infty} \left[u^k_{j}g_{j}(x^k) \right] &= \lim_{k\rightarrow \infty} \left[t_ku^0_{j}g_{j}(x^0) \right] = 
               \bar t u_{j}^0g_{j}(x^0) <0.\\
            0 = \bar{u}_j g_j(\bar{x}) &= \bar{t} u^0_j g_j(x^0) < 0.
        \end{align*}
        Since $g_j(\bar{x}) = 0, ~ g_{j_0}(x^0) < 0, ~\bar t \in (0,1)$ and $u^0 \in \mathbb{R}_{++}^{m}$, the right-hand side is negative, but the left-hand side is zero. This is a contradiction.

\noindent If Case 3 is true, $\bar{\text{w}} \in \partial \mathbb{R}^p_{++}.$ Suppose $\bar{\text{w}}_{j_0} = 0$ for some $j_0$. Then, there exists a sequence $\{(\omega^k, t_k)\} \subset \Gamma_{\omega^0}$ such that $\text{w}^k_{j_0} \to 0$. From the homotopy equation (\ref{4d}):
        \[\displaystyle
        (1 - t_k)\left(1 - \sum_{i=1}^p \text{w}^k_i\right) - t_k \left[ (\text{w}^k_{j_0})^{3/2} - (\text{w}^0_{j_0})^{3/2} \right] = 0.
        \]
        Taking the limit as $k \to \infty$, we have:
    \begin{equation} \label{eq: sum_alternative}
        (1 - \bar{t})\left(1 - \sum_{i=1}^p \bar{\text{w}}_i\right) + \bar{t} (\text{w}^0_{j_0})^{3/2} = 0.
    \end{equation}

    \noindent Since $\bar{t} \in (0, 1)$ and $\text{w}^0_{j_0} > 0$, this implies:
        \[\displaystyle
        1 - \sum_{i=1}^p \bar{\text{w}}_i < 0,
        \]
        or equivalently:
        \begin{equation} \label{eq: sum_big_1}
            \displaystyle \sum_{i=1}^p \bar{\text{w}}_i > 1.
        \end{equation}
        
        \noindent Notice that, inequality (\ref{eq: sum_big_1}) poses a problem in itself as we generally expect $\displaystyle \sum_{i=1}^p \text{w}^k_i \to 1$ and not exceed $1.$ However, let's go further and generate more contradictions. Observe that from equation (\ref{eq: sum_alternative}), 
        \begin{align}
            (1 - \bar{t})\left(1 - \sum_{i=1}^p \bar{\text{w}}_i\right) & = -\bar{t} (\text{w}^0_{j_0})^{3/2}. \label{eq_sum_substitute}
        \end{align}
        
\noindent Pick $j \neq j_0$ such that $\text{w}_j^0$ is the smallest among the components of $\text{w}^0$, the homotopy equation (\ref{4d}) in the limit becomes:
        \begin{align}
            (1 - t_k)\left(1 - \sum_{i=1}^p \text{w}^k_i\right) - t_k \left[ (\text{w}^k_{j})^{3/2} - (\text{w}^0_{j})^{3/2} \right] = 0.
        \end{align}
\noindent Substituting $\displaystyle (1 - \bar{t})\left(1 - \sum_{i=1}^p \bar{\text{w}}_i\right)  = -\bar{t} (\text{w}^0_{j_0})^{3/2}$ from equation (\ref{eq_sum_substitute}), we obtain:
\begin{align}
            -\bar{t} \left(\text{w}^0_{j_0}\right)^{3/2} &- \bar t \left[ (\bar {\text{w}}_{j})^{3/2} - (\text{w}^0_{j})^{3/2} \right] = 0. \nonumber \\
            (\bar{\text{w}}_j)^{3/2} &= (\text{w}_j^0)^{3/2} - (\text{w}_{j_0}^0)^{3/2}. \label{eq:w_depending}
        \end{align}
         
\noindent Equation (\ref{eq:w_depending}) implies that $\bar {\text{w}}_j$ 
depends on the difference between $(\text{w}_j^0)^{3/2}$ and $(\text{w}_{j_0}^0)^{3/2}$. Since $\text{w}_j^0$ is the smallest component, then $(\text{w}_j^0)^{3/2} \leq (\text{w}_{j_0}^0)^{3/2}$. 
If $(\text{w}_j^0)^{3/2} < (\text{w}_{j_0}^0)^{3/2}$ i.e., $\text{w}_j^0 < \text{w}_{j_0}^0,$ the right-hand side of equation (\ref{eq:w_depending}) becomes negative, which is impossible because the left side $\bar{\text{w}}_j \geq 0$. This is a contradiction. Thus, $(\text{w}_j^0)^{3/2} = (\text{w}_{j_0}^0)^{3/2}$, i.e., $\text{w}_j^0 = \text{w}_{j_0}^0$. This leads to a deeper contradiction as follows:
\begin{enumerate}
\item If $\text{w}_j^0 = \text{w}_{j_0}^0$ for all $j$, this implies all components of $\text{w}^0$ are equal. Then $\bar{\text{w}}_j = 0$ from (\ref{eq:w_depending}). This is impossible since there is at least one $\bar{\text{w}}_j > 0$ for some $j$ because (\ref{eq: sum_big_1}) forces at least one $\bar{\text{w}}_j$ to ``large enough'' to make the sum exceed $1$. Thus, $\text{w}_j^0 \neq \text{w}_{j_0}^0$ for some $j$, restoring the contradiction.

\item If $\text{w}_j^0 = \text{w}_{j_0}^0$ for specific $j\neq j_0$, we have $\text{w}_j^0 = \text{w}_{j_0}^0 =: \text{w}_{\min}^0 > 0.$
Then \((\ref{eq:w_depending})\) yields
\[
(\bar{\text{w}}_j)^{3/2} = (\text{w}_j^0)^{3/2} - (\text{w}_{j_0}^0)^{3/2} = (\text{w}_{\min}^0)^{3/2} - (\text{w}_{\min}^0)^{3/2} = 0,
\]
so that
\[
\bar{\text{w}}_j = 0.
\]
\noindent However, by assumption we already had \(\bar{\text{w}}_{j_0}=0\). Hence, at least two components of the multiplier vector \(\bar{\text{w}}\) vanish. Recall that equation from \((\ref{eq: sum_big_1})\) we have 
\[\sum_{i=1}^p \bar{\text{w}}_i > 1.\]

On the other hand, the original homotopy construction is set up 
so that the multiplier components satisfy
\[
\sum_{i=1}^p \text{w}_i^0 = 1,
\]
and the homotopy is designed so that along the smooth curve the multipliers remain in a neighborhood of \(\text{w}^0\). In the limit, if two (or more) components become zero (i.e. \(\bar{\text{w}}_{j_0} = \bar{\text{w}}_j = 0\)), then the remaining components (being perturbations of their original values) cannot compensate enough to push the sum beyond \(1\). In fact, if several components vanish, the limit sum \(\sum_{i=1}^p \bar{\text{w}}_i\) must be strictly less than \(1\) (or at most equal to \(1\) under a very special and nongeneric scenario), contradicting the inequality
\[
\sum_{i=1}^p \bar{\text{w}}_i > 1.
\]
Thus, the assumption that \(\text{w}_j^0 = \text{w}_{j_0}^0\) for some \(j\neq j_0\) leads to a contradiction.
        \end{enumerate}
        
\noindent Hence \textbf{Case (b)} is impossible.

Since \textbf{Case (a)} and \textbf{Case (b)} are impossible, the only possible limit is \textbf{Case (c)}: $(\bar{\omega}, \bar{t}) \in \Omega \times \mathbb{R}^{p+m}_{+} \times \mathbb{R}^s \times \{0\}$. By continuity, $\bar{\omega}$ satisfies the KKT system at $t = 0$. Thus, the limit set $T \times \{0\}$ is nonempty, and every point in $T$ is a solution of the KKT system. 
\medskip
By Theorem \ref{conver}, for almost all $\omega^0\in \Omega\times
\mathbb{R}^{p+m}_{++}\times \mathbb{R}^s\times\{0\},$ the homotopy 
(\ref{homotopy}) generates a smooth curve $\Gamma_{\omega^0}$ which
we call the homotopy path, and the $\omega$ component of
$(\omega(s), t(s))$ in the homotopy path,  is the solution of
(\ref{KKT}) as $t(s)\rightarrow 0.$
\end{proof}

\noindent \textbf{Some Remarks:}\\
Theorem \ref{conver} shows that for almost all
$\omega^0\in \Omega\times \mathbb{R}^{p+m}_{++}\times
\mathbb{R}^s\times\{0\},$ the homotopy (\ref{homotopy})
generates a smooth curve $\Gamma_{\omega^0}$ starts from
$(\omega^0,1)$ which is called the homotopy path,  the limit set
$T\times \{0\}\subset \bar{\Omega}\times R^{p+m+s}_{+}\times \{0\}$
of $\Gamma_{\omega^0}$ is nonempty, and the $x-$component of every
point in $T$ is a KKT point of (MOO), the $\omega-$component of the
homotopy path is the solution of (\ref{KKT}) as $t$ goes to $0$.
\end{appendices}

\bibliographystyle{plain}   
\bibliography{references}   

\end{document}